\documentclass[sn-mathphys,Numbered]{sn-jnl}


\usepackage{graphicx}%
\usepackage{multirow}%
\usepackage{amsmath,amssymb,amsfonts}%
\usepackage{amsthm}%
\usepackage{mathrsfs}%
\usepackage[title]{appendix}%
\usepackage{xcolor}%
\usepackage{textcomp}%
\usepackage{manyfoot}%
\usepackage{booktabs}%
\usepackage{algorithm}%
\usepackage{algorithmicx}%
\usepackage{algpseudocode}%
\usepackage{listings}%



\theoremstyle{thmstyleone}%
\newtheorem{theorem}{Theorem}
\newtheorem{proposition}[theorem]{Proposition}%

\theoremstyle{thmstyletwo}%
\newtheorem{example}{Example}%
\newtheorem{remark}{Remark}%
\newtheorem{corollary}{Corollary}%
\newtheorem{lemma}{Lemma}%

\theoremstyle{thmstylethree}%

\raggedbottom

\begin{document}

\title[Article Title]{On semi-continuity and continuity of the smallest and largest minimizing point of real convex functions with applications in probability and statistics}


\author{\fnm{Dietmar} \sur{Ferger}}\email{dietmar.ferger@tu-dresden.de}



\affil{\orgdiv{Fakult\"{a}t Mathematik}, \orgname{Technische Universit\"{a}t Dresden}, \orgaddress{\street{Zellescher Weg 12-14}, \city{Dresden}, \postcode{01069}, \country{Germany}}}




\abstract{We prove that the smallest minimizer $\sigma(f)$ of a real convex function $f$ is less than or equal to a real point $x$ if and only if
the right derivative of $f$ at $x$ is non-negative. Similarly, the largest minimizer $\tau(f)$ is greater or equal to $x$ if and only if the left derivative
of $f$ at $x$ is non-positive. From this simple result we deduce measurability and semi-continuity of the functionals $\sigma$ and $\tau$.
Furthermore, if $f$ has a unique minimizing point, so that $\sigma(f)=\tau(f)$, then the functional is continuous at $f$. With these analytical preparations we can
apply Continuous Mapping Theorems to obtain several Argmin theorems for convex stochastic processes. The novelty here are statements about classical distributional convergence and almost sure convergence, if the limit process does not have a unique minimum point. This is possible by replacing the natural topology on $\mathbb{R}$ with the order topologies. Another new feature is that not only sequences but more generally nets of convex stochastic processes are allowed.}

\keywords{order topologies, distributional convergence in topological spaces, Argmin theorems, functional limit theorem for convex processes}



\maketitle

\section{Introduction}\label{sec1}

Let $f:\mathbb{R} \rightarrow \mathbb{R}$ be a convex function with pertaining set
\begin{equation} \label{A}
A(f):=\{t \in \mathbb{R}: f(t)=\inf_{s \in \mathbb{R}}f(s)\}
\end{equation}
of all minimizing points. This \emph{minimum set} can equivalently be described in terms of the right and left derivative $D^+f$ and $D^-f$ of $f$.
Indeed, it follows from Theorem 23.2 of Rockafellar \cite{Rockafellar} that
\begin{equation} \label{minset}
 A(f)= \{t \in \mathbb{R}: D^-f(t) \le 0 \le D^+f(t)\}.
\end{equation}

Of course, it can happen that $A(f)$ is empty. However, if this is not the case, it is well-known (and actually easy to see) that then
$A(f)$ is closed and convex and hence is a closed interval. The extreme case $A(f)=\mathbb{R}$ occurs if and only if $f$ is a constant function. So, as long as $f$ is not a constant function there are three
possibilities: (i) $A(f)=[a,b], a \le b \in \mathbb{R}$. (ii) $A(f)= [a,\infty), a \in \mathbb{R}$. (iii) $A(f)=(-\infty,b], b \in \mathbb{R}$.\\

Let $C:=\{f:\mathbb{R} \rightarrow \mathbb{R}; f \text{ convex}\}$ be the class of all convex functions.
Introduce
\begin{equation*}
 S:=\{f \in C: \inf A(f)>-\infty\}
\end{equation*}
\begin{equation*}
 S':=\{f \in C: \sup A(f) < \infty\}.
\end{equation*}

Thus $S$ and $S'$ consist exactly of those functions for which the smallest and largest minimizer, respectively, exist.
Consequently the functionals $\sigma:S \rightarrow \mathbb{R}$ and $\tau:S' \rightarrow \mathbb{R}$
given by $\sigma(f)=\min A(f)$ and $\tau(f)=\max A(f)$ are well defined on their domains.\\

Our first main result provides a necessary and sufficient condition for the location of the smallest and the largest minimizer, respectively.
It is rather simple with an elementary proof, but with it we will draw a whole series of useful conclusions.
In section 2 it is shown that $\sigma$ and $\tau$ are measurable and semi-continuous. On the set $S_u$ of all convex functions with exactly one minimizer the functionals $\sigma$ and $\tau$ coincide and are furthermore continuous there. In section 3 this is used in combination with Continuous Mapping Theorems to derive several Argmin theorems for convex stochastic processes. For a further discussion of our findings we refer to the concluding remarks at the end of section 3.\\

\begin{theorem} \label{location}
Let $\sigma(f)=\min A(f)$ and $\tau(f)=\max A(f)$ be the smallest and largest minimizing point of the convex function $f$.
Then the following equivalent relations hold for every $x \in \mathbb{R}$:

\begin{equation} \label{sigma}
\sigma(f) \le x \quad \Longleftrightarrow \quad D^+f(x) \ge 0
\end{equation}
\begin{equation} \label{tau}
\tau(f) \ge x  \quad \Longleftrightarrow \quad D^-f(x) \le 0.
\end{equation}
\end{theorem}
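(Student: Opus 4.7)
The approach I would take rests on two standard properties of a convex function $f: \mathbb{R} \to \mathbb{R}$: the identification $A(f) = \{t : D^-f(t) \le 0 \le D^+f(t)\}$ already recorded in (\ref{minset}), and the monotonicity of the one-sided derivatives $D^-f$ and $D^+f$, which are non-decreasing and satisfy $D^-f \le D^+f$ pointwise. Combined with the elementary consequence that if $D^+f \ge 0$ on an interval then $f$ is non-decreasing there (via the supporting inequality $f(t) \ge f(s) + D^+f(s)(t-s)$ for $s \le t$), these ingredients should suffice.

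For the forward direction of (\ref{sigma}), I would argue that $\sigma(f) \le x$ implies $\sigma(f) \in A(f)$, hence $D^+f(\sigma(f)) \ge 0$ by (\ref{minset}), and monotonicity of $D^+f$ then gives $D^+f(x) \ge D^+f(\sigma(f)) \ge 0$. For the converse, assume $D^+f(x) \ge 0$. Monotonicity yields $D^+f(t) \ge 0$ for every $t \ge x$, so $f$ is non-decreasing on $[x,\infty)$. If $\sigma(f)$ were strictly greater than $x$, then $f(\sigma(f)) \le f(x)$ by minimality together with $f(\sigma(f)) \ge f(x)$ from monotonicity would force $f(x) = \min f$, putting $x$ into $A(f)$ and contradicting $x < \min A(f)$. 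Hence $\sigma(f) \le x$.

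The equivalence (\ref{tau}) follows from a completely symmetric argument: $\tau(f) \ge x$ gives $\tau(f) \in A(f)$ and thus $D^-f(\tau(f)) \le 0$, which by monotonicity of $D^-f$ propagates down to $D^-f(x) \le 0$; conversely $D^-f(x) \le 0$ makes $f$ non-increasing on $(-\infty, x]$, and the analogous contradiction rules out $\tau(f) < x$. I do not anticipate any genuine obstacle here; the only point requiring a little care is the borderline case in which $x$ itself belongs to $A(f)$, but this is absorbed by the non-strict inequalities used throughout.
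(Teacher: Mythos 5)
Your proposal is correct. The forward implications and the basic ingredients (the characterization (\ref{minset}) of $A(f)$, monotonicity of $D^{\pm}f$, and the subgradient inequality giving monotonicity of $f$ where $D^+f\ge 0$) coincide with the paper's proof, but you close the converse direction of (\ref{sigma}) by a genuinely shorter route. The paper deduces from $D^+f(x)\ge 0$ that $\inf_{t\in\mathbb{R}}f=\inf_{t\le x}f$, then separately proves that $f$ is non-increasing on $(-\infty,\sigma(f)]$ and \emph{strictly} larger than $f(\sigma(f))$ on $(-\infty,\sigma(f))$, and finally obtains a contradiction from $x<\sigma(f)$ by comparing infima. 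You instead note that if $x<\sigma(f)$ then $\sigma(f)$ lies in $[x,\infty)$, where $f$ is non-decreasing, so $f(x)\le f(\sigma(f))=\min f\le f(x)$ forces $f(x)=\min f$, hence $x\in A(f)$, contradicting $x<\min A(f)$; this bypasses the strict-inequality step entirely. Importantly, your argument does not fall into the trap flagged in Remark \ref{exist}: you never claim that $D^+f(x)\ge 0$ alone makes $x$ a minimizer, only that it does so under the additional contradiction hypothesis $\sigma(f)>x$. For (\ref{tau}) you argue by direct symmetry where the paper uses the time reversal $f_-(t)=f(-t)$ to reduce (\ref{tau}) to (\ref{sigma}); both are routine. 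In short, your version is leaner; the paper's version additionally records the monotone behaviour of $f$ on either side of its minimum set, which is of some independent interest but not needed for the equivalences themselves.
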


\begin{proof} For the proof of (\ref{sigma}) we briefly write $\sigma$ for $\sigma(f)$. Recall that $D^+f$ and $D^-f$ are non-decreasing, confer, e.g., Theorem 1.3.3 in Niculescu and Persson \cite{Niculescu}. Thus $\sigma \le x$ entails $D^+f(x) \ge D^+f(\sigma) \ge 0$, where the last inequality follows from (\ref{minset}), because
$\sigma \in A(f)$. To see the reverse conclusion in (\ref{sigma}) we use that by Theorem 1.3.1 in Niculescu and Persson \cite{Niculescu} the
difference quotients $\frac{f(t)-f(x)}{t-x}$ are non-increasing as $t\downarrow x$. Therefore we obtain
\begin{equation} \label{D+}
 0 \le D^+f(x)=\inf_{t>x}\frac{f(t)-f(x)}{t-x} \le \frac{f(t)-f(x)}{t-x} \quad \forall \; t>x.
\end{equation}
Multiplication with the positive difference $t-x$ yields that $f(t) \ge f(x)$ for all $t>x$ and so
$$
 \inf_{t>x}f(t) \ge f(x) \ge \inf_{t \le x}f(t),
$$
where the second inequality is trivial. To sum up we arrive at
\begin{equation} \label{inf}
 \inf_{t \in \mathbb{R}} f(t)=\inf_{t \le x} f(t).
\end{equation}
Next we consider a point $t < \sigma$. Infer from  the monotonicity of $D^-f$ that
$D^-f(t) \le D^-f(\sigma) \le 0$, where the last inequality is ensured by (\ref{minset}).
Just as for $D^+f$, we see that $D^-f(t)=\sup_{s<t}\frac{f(s)-f(t)}{s-t}$, whence in view of $D^-f(t) \le 0$ we have that
$$
 \frac{f(s)-f(t)}{s-t} \le 0 \quad \forall\; s<t.
$$
Multiplication with the negative difference $s-t$ gives $f(s) \ge f(t)$ for all $s <t < \sigma$.
Since $f$ is continuous,
taking the limit $t \uparrow \sigma$ finally shows that $f$ is non-increasing on the
closed half-line $(-\infty,\sigma]$. In particular, $f(t) \ge f(\sigma)$ for all $t \le \sigma$.
Actually, on the open interval $(-\infty,\sigma)$ the inequality is strict:
\begin{equation} \label{strict}
 f(t) > f(\sigma) \quad \forall \; t < \sigma.
\end{equation}
This is because otherwise there exists some $t_0 < \sigma$ such that $f(t_0) \le f(\sigma)$.
Since $\sigma$ is a minimizing point, $t_0$ must be a minimizing point as well.
This is a contradiction to minimality of $\sigma$.

Now, assume that $x< \sigma$. Deduce from $f$ is non-increasing on $(-\infty,x] \subseteq (-\infty,\sigma]$
that $f(t) \ge f(x)$ for all $t \le x$. Consequently, $\inf_{t \le x}f(t) \ge f(x)$ and therefore by (\ref{inf}) and (\ref{strict}):
$$
 f(\sigma)=\inf_{t \in \mathbb{R}} f(t) = \inf_{t \le x} f(t) \ge f(x) > f(\sigma).
$$
This is a contradiction and thus $x \ge \sigma$ is true as desired.

For the proof of (\ref{tau}) we use a time-reversing argument. Introduce the function $f_{-}$ defined by $f_{-}(t):=f(-t), t \in \mathbb{R}$.
One easily verifies that $f_{-}$ is convex and that $\tau(f)=-\sigma(f_{-})$. We obtain by (\ref{sigma}):
$$
 \tau(f) \ge x \quad \Longleftrightarrow \quad -\sigma(f_{-}) \ge x \quad \Longleftrightarrow \quad \sigma(f_{-}) \le -x \quad \Longleftrightarrow \quad D^+f_{-}(-x) \ge 0
$$
and the assertion follows upon noticing that $D^+f_{-}(-x)=-D^-f(x)$.
\end{proof}

\begin{remark} \label{exist}
Under the existence of the minimizers one has that:
\begin{equation} \label{sim}
 (a) \quad \sigma(f)\le x, \tau(f) \ge x \quad \Longleftrightarrow \quad (b) \quad D^+f(x) \ge 0, D^-f(x) \le 0.
\end{equation}
Indeed, recall that $D^+f$ and $D^-f$ are non-decreasing, which by (\ref{minset}) shows necessity of (b). By another application of (\ref{minset}) the point x in (b) is a minimizer, $x \in A(f)$, and (a) follows using the minimum and maximum property of $\sigma(f)=\min A(f)$ and $\tau(f)=\max A(f)$, respectively.

Note that we cannot readily infer Proposition \ref{location}  from (\ref{sim}). For example, if we only know that $D^+f(x) \ge 0$ holds, then $x$ need not be a minimizing point in general, and is not, as can be seen from simple examples as for instance $f(x)=x^2$. Consequently, the argument via the minimum property of $\sigma(f)$ fails.\\
\end{remark}

For every non-decreasing function $F:\mathbb{R} \rightarrow \mathbb{R}$ we introduce the \emph{generalized inverses}:
$$
 F^\wedge(y):=\inf\{x \in \mathbb{R}: F(x) \ge y \} \quad \text{and} \quad F^\vee(y):=\sup\{x \in \mathbb{R}: F(x) \le y \}, \; y \in \mathbb{R}.
$$
For properties of these inverses, confer Embrechts and Hofert \cite{Embrechts}, Feng et.al. \cite{Feng} or Fortelle \cite{Fortelle}.
Notice that (\ref{sigma}) is the same as $[\sigma(f),\infty)= \{x \in \mathbb{R}: D^+f(x) \ge 0 \}$ and hence
$$
 \sigma(f)= (D^+f)^\wedge(0).
$$

\vspace{0.5cm}
Similarly, $(-\infty,\tau(f)]=\{x \in \mathbb{R}: D^-f(x) \le 0 \}$ by (\ref{tau}), whence
$$
 \tau(f)= (D^-f)^\vee(0).
$$

\section{Measurability, Semi-continuity and Continuity of the argmin-functionals}

Recall that $C$ is the class of all convex functions on $\mathbb{R}$.
For each $t \in \mathbb{R}$ let $\pi_t:C \rightarrow \mathbb{R}$ denote the projection (evaluation map)
at $t$, that is $\pi_t(f)=f(t)$. We endow the function space $C$ with the $\sigma-$algebra generated by the projections:
$\mathcal{C}:= \sigma(\pi_t: t \in \mathbb{R})$. Furthermore, $C$ is equipped with the topology $\mathcal{T}$ of pointwise convergence, which is known to be generated by the projections: $\mathcal{T}:=\tau(\pi_t: t \in \mathbb{R})$. Recall that $\mathcal{T}$ is the smallest topology on $C$ for which all projections are continuous.
Note that the trace $\mathcal{C}_S:= S \cap \mathcal{C}$ in $S$ is generated by the restrictions of $\pi_t$ to $S$.
Analogously, the subspace topology $\mathcal{T}_S = S \cap \mathcal{T}$ on $S$ is generated by these restrictions.
The corresponding statements hold for $S^\prime$ endowed with the trace $\mathcal{C}_{S'}=S' \cap \mathcal{C}$ and the subspace topology $\mathcal{T}_{S'}=S' \cap \mathcal{T}$. \\

\begin{proposition} \label{semi}
\begin{itemize}
\item[(i)]
$\sigma:S \rightarrow \mathbb{R}$ is $\mathcal{C}_S$-Borel measurable and $\mathcal{T}_S-$lower semicontinuous.
\item[(ii)]
$\tau:S' \rightarrow \mathbb{R}$ is $\mathcal{C}_{S'}$-Borel measurable and $\mathcal{T}_{S'}-$upper semicontinuous.
\end{itemize}
\end{proposition}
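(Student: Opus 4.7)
The plan is to combine Theorem \ref{location} with a countable-intersection argument over rationals in order to express the sub-level set $\{\sigma\le x\}$ (and the super-level set $\{\tau\ge x\}$) as a countable intersection of sets of the form $\{f:\pi_t(f)-\pi_x(f)\ge 0\}$. Each such set is simultaneously closed in the topology $\mathcal{T}$ of pointwise convergence and measurable with respect to the projection $\sigma$-algebra $\mathcal{C}$, so Borel measurability and semi-continuity will fall out together.

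For (i), I would first observe that $D^+f(x)\ge 0$ is equivalent to $f(t)\ge f(x)$ for every $t>x$: writing $D^+f(x)=\inf_{t>x}(f(t)-f(x))/(t-x)$ shows the forward direction since each difference quotient dominates the infimum (as already exploited in the proof of Theorem \ref{location}), and the reverse implication is immediate. Since every convex function on $\mathbb{R}$ is automatically continuous, a density argument (approximate any real $t>x$ by rationals $t_n\in(x,t]$) replaces the quantifier ``$\forall t>x$'' by ``$\forall t\in\mathbb{Q},\,t>x$''. Invoking Theorem \ref{location} then yields
\[
\{f\in S:\sigma(f)\le x\} \;=\; S\cap \bigcap_{t\in\mathbb{Q},\,t>x}\{f\in C:\pi_t(f)-\pi_x(f)\ge 0\}.
\]
Because $\pi_t-\pi_x$ is both $\mathcal{T}$-continuous and $\mathcal{C}$-measurable, each factor is closed in $\mathcal{T}$ and an element of $\mathcal{C}$, and both properties are preserved by the countable intersection. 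Hence $\{\sigma\le x\}$ is closed in $\mathcal{T}_S$ and belongs to $\mathcal{C}_S$. Since the half-lines $(-\infty,x]$ generate the Borel $\sigma$-algebra on $\mathbb{R}$, and since closed sub-level sets characterise lower semi-continuity, both assertions in (i) follow.

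Part (ii) is obtained by the symmetric dualisation: $D^-f(x)\le 0$ is equivalent to $f(s)\ge f(x)$ for all $s<x$, and continuity again permits the restriction to rational $s$, so
\[
\{f\in S':\tau(f)\ge x\} \;=\; S'\cap \bigcap_{s\in\mathbb{Q},\,s<x}\{f\in C:\pi_s(f)-\pi_x(f)\ge 0\}
\]
is closed in $\mathcal{T}_{S'}$ and lies in $\mathcal{C}_{S'}$. Closed super-level sets characterise upper semi-continuity, and measurability of every such super-level set gives Borel measurability of $\tau$. The one potentially delicate step is the reduction of the uncountable quantifier ``$\forall t>x$'' to the countable one ``$\forall t\in\mathbb{Q},\,t>x$'', which is indispensable for membership in a product $\sigma$-algebra generated by single-point projections; automatic continuity of convex real functions justifies it at once, so I expect no real obstacle to arise.
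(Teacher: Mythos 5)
Your proposal is correct and follows essentially the same route as the paper: both reduce $\{\sigma\le x\}$ via Theorem \ref{location} to the condition $f(t)\ge f(x)$ for all $t>x$, restrict to rational $t$ by continuity of convex functions, and write the result as a countable intersection of sets $(\pi_t-\pi_x)^{-1}([0,\infty))$, which are simultaneously $\mathcal{T}$-closed and $\mathcal{C}$-measurable. The dual argument for $\tau$ via $D^-f(x)=\sup_{s<x}(f(s)-f(x))/(s-x)$ is likewise the paper's argument.
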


\begin{proof} For each $x \in \mathbb{R}$ we have that:
\begin{eqnarray*}
&&\{f \in S: \sigma(f) \le x \}\\
&=&\{f \in S: D^+f(x) \ge 0\} \hspace{2cm} \text{ by Theorem \ref{location}}\\
&=&\{f \in S: \inf_{x<t}\frac{f(t)-f(x)}{t-x} \ge 0\}\hspace{1cm} \text{by the first equality in (\ref{D+})}\\
&=&\{f \in S: \inf_{x<t \in \mathbb{Q}}\frac{f(t)-f(x)}{t-x} \ge 0\} \hspace{0.64cm} \text{by continuity of } f\\
&=& \bigcap_{x<t\in \mathbb{Q}} \{f \in S: \frac{f(t)-f(x)}{t-x} \ge 0\}\\
&=& \bigcap_{x<t\in \mathbb{Q}} \{f \in S: f(t)-f(x) \ge 0\}\\
&=& \bigcap_{x<t\in \mathbb{Q}} \big(S \cap (\pi_t-\pi_x)^{-1}([0,\infty))\big)\\
&=& S \cap \bigcap_{x<t\in \mathbb{Q}} (\pi_t-\pi_x)^{-1}([0,\infty))\\
\end{eqnarray*}
By construction of $\mathcal{C}$ every projection is $\mathcal{C}$-measurable, whence the differences $\pi_t-\pi_x$ are $\mathcal{C}$-measurable
as well and therefore $(\pi_t-\pi_x)^{-1}([0,\infty)) \in \mathcal{C}$ for all rationals $t>x$. Since $\mathcal{C}$ is closed under denumerable intersections we arrive at $\{f \in S: \sigma(f) \le x \} \in S \cap \mathcal{C} = \mathcal{C}_S$ for all $x \in \mathbb{R}$, which by Lemma 1.4 in Kallenberg \cite{Kall1} shows measurability of $\sigma$.

As to semicontinuity recall that by construction of $\mathcal{T}$ every projection $\pi_t$ is $\mathcal{T}$-continuous, whence the differences $\pi_t-\pi_x$ are $\mathcal{T}$-continuous as well and therefore $(\pi_t-\pi_x)^{-1}([0,\infty))$ are $\mathcal{T}-$closed for all $t>x$. Since $\mathcal{T}$ is closed under every kind of intersections we arrive at $\{f \in S: \sigma(f) \le x \}$ is $\mathcal{T}_S-$closed for all $x \in \mathbb{R}$, which shows
$\mathcal{T}_S-$semicontinuity of $\mathcal{\sigma}$.

The second part follows in the same way. Indeed, since $D^-f(x)=\sup_{t<x}\frac{f(t)-f(x)}{t-x}$ it follows analogously that
$$
 \{f \in S': \tau(f) \ge x \} = S' \cap \bigcap_{x<t\in \mathbb{Q}} (\pi_t-\pi_x)^{-1}([0,\infty)).
$$
\end{proof}

Next we give further equivalent characterizations of semi-continuity.
The first one is an immediate consequence of Proposition \ref{semi} and the definition of continuity (pre-images of open sets are open).\\

\begin{corollary} \label{ordertop}
Let $\mathcal{O}_<:=\{(-\infty,x): x \in \mathbb{R}\} \cup \{\emptyset, \mathbb{R}\}$ and $\mathcal{O}_>:=\{(x,\infty): x \in \mathbb{R}\}\cup \{\emptyset, \mathbb{R}\}$ be the left-order topology and the right-order topology.
Then:
\begin{itemize}
\item[(1)]
$\sigma:(S,\mathcal{T}_S) \rightarrow (\mathbb{R}, \mathcal{O}_>)$ is continuous.
\item[(2)]
$\tau:(S',\mathcal{T}_{S'}) \rightarrow (\mathbb{R}, \mathcal{O}_<)$ is continuous.
\end{itemize}
\end{corollary}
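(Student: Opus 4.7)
The plan is to translate the semi-continuity established in Proposition~\ref{semi} directly into continuity with respect to the two order topologies, since this is essentially a matter of unpacking definitions.

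First I would observe that the right-order topology $\mathcal{O}_>$ has as its nontrivial open sets exactly the rays $(x,\infty)$, $x \in \mathbb{R}$; any other member of $\mathcal{O}_>$ is either $\emptyset$ or $\mathbb{R}$. Hence to prove part~(1), by the standard characterization of continuity it is enough to verify that $\sigma^{-1}((x,\infty))$ belongs to $\mathcal{T}_S$ for every $x \in \mathbb{R}$. But
\[
\sigma^{-1}((x,\infty)) \;=\; \{f \in S : \sigma(f) > x\} \;=\; S \setminus \{f \in S : \sigma(f) \le x\},
\]
and by Proposition~\ref{semi}(i) the set subtracted is $\mathcal{T}_S$-closed, since this is precisely the content of lower semi-continuity of a real-valued function. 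The preimages of $\emptyset$ and $\mathbb{R}$ under $\sigma$ are trivially in $\mathcal{T}_S$, so continuity follows.

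Part~(2) is symmetric: the nontrivial open sets of $\mathcal{O}_<$ are the rays $(-\infty,x)$, and
\[
\tau^{-1}((-\infty,x)) \;=\; S' \setminus \{f \in S' : \tau(f) \ge x\}
\]
is $\mathcal{T}_{S'}$-open by the upper semi-continuity of $\tau$ furnished by Proposition~\ref{semi}(ii).

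There is no serious obstacle in this argument; the only point worth being careful about is to align conventions, namely that lower semi-continuity of $\sigma$ amounts to $\{f : \sigma(f) \le x\}$ being closed for every $x$ (equivalently $\{f : \sigma(f) > x\}$ being open), and analogously for upper semi-continuity of $\tau$. These are exactly the statements proved in Proposition~\ref{semi}, so the corollary reduces to a one-line observation once the open sets of the order topologies have been identified.
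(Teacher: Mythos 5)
Your proposal is correct and is precisely the argument the paper intends: the paper dispenses with the corollary as "an immediate consequence of Proposition \ref{semi} and the definition of continuity (pre-images of open sets are open)," and your write-up simply makes explicit that $\sigma^{-1}((x,\infty))$ is the complement of the $\mathcal{T}_S$-closed set $\{f \in S: \sigma(f)\le x\}$ furnished by that proposition (and symmetrically for $\tau$). No gaps.
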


Sometimes it is advantageous to consider the restrictions of $\sigma$ and $\tau$ on subspaces.\\
\begin{remark} \label{U}
Let $\emptyset \neq U \subseteq S$ be endowed with $\mathcal{C}_U:=U \cap \mathcal{C}$ and $\mathcal{T}_U:= U \cap \mathcal{T}$.
Then $\sigma:U \rightarrow \mathbb{R}$ is $\mathcal{C}_U-$Borel
measurable and $\sigma:(U,\mathcal{T}_U) \rightarrow (\mathbb{R}, \mathcal{O}_>)$ is continuous.
Similarly, if $U \subseteq S'$, then $\tau:U \rightarrow \mathbb{R}$ is $\mathcal{C}_U-$Borel
measurable and $\tau:(U,\mathcal{T}_U) \rightarrow (\mathbb{R}, \mathcal{O}_<)$ is continuous.
\end{remark}

\vspace{0.5cm}
Corollary \ref{ordertop} in turn yields a second equivalent description of semi-continuity via net-convergence. For this purpose, let $(I,\le)$ be here and in the following a directed
set. Also recall the definition
\begin{equation} \label{Su}
 S_u :=\{f \in C: f \text{ has a unique minimizing point}\}.
\end{equation}

\begin{corollary} \label{netconv}
Assume that $(f_\alpha)_{\alpha \in I} \subseteq C$ converges pointwise to $f$ on $\mathbb{R}$. Then the following statements apply:
\begin{itemize}
\item[(1)]
If $(f_\alpha)_{\alpha \in I} \subseteq S$ and $f \in S$, then $\liminf_{\alpha} \sigma(f_\alpha) \ge \sigma(f)$.
\item[(2)]
If $(f_\alpha)_{\alpha \in I} \subseteq S'$ and $f \in S'$, then $\limsup_{\alpha} \tau(f_n) \le \tau(f)$.
\item[(3)]
If $(f_\alpha)_{\alpha \in I} \subseteq S \cap S'$ and $f \in S_u$, then $\lim_{\alpha} \sigma(f_\alpha)= \sigma(f)$ and $\lim_{\alpha} \tau(f_\alpha)= \tau(f)$. Note that $\sigma(f)=\tau(f)$, because $f \in S_u$. Thus the smallest minimizer and the largest minimizer converge to the same limit.
\end{itemize}
\end{corollary}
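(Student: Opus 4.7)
The plan is to read off (1) and (2) directly from Corollary \ref{ordertop} by translating net convergence in the order topologies into $\liminf$/$\limsup$ inequalities, and then to obtain (3) by a sandwich argument using that every $f_\alpha \in S \cap S'$ automatically satisfies $\sigma(f_\alpha) \le \tau(f_\alpha)$.

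First I would observe that pointwise convergence $f_\alpha \to f$ on $\mathbb{R}$ is, by construction, exactly convergence in the topology $\mathcal{T}$, and hence also in the subspace topologies $\mathcal{T}_S$ and $\mathcal{T}_{S'}$ once $f_\alpha, f$ lie in the corresponding subspace. Next, I would recall the characterization of convergence in the right-order topology $\mathcal{O}_>$: a net $y_\alpha$ converges to $y$ in $\mathcal{O}_>$ iff for every neighbourhood $(x,\infty)$ of $y$ the net is eventually in $(x,\infty)$, i.e.\ $y_\alpha > x$ eventually for every $x < y$, which is equivalent to $\liminf_\alpha y_\alpha \ge y$. Analogously, convergence in $\mathcal{O}_<$ to $y$ is equivalent to $\limsup_\alpha y_\alpha \le y$.

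With this translation, part (1) is immediate: by Corollary \ref{ordertop}(1), $\sigma:(S,\mathcal{T}_S) \to (\mathbb{R},\mathcal{O}_>)$ is continuous, hence carries the convergent net $f_\alpha \to f$ in $\mathcal{T}_S$ to $\sigma(f_\alpha) \to \sigma(f)$ in $\mathcal{O}_>$, which is the stated $\liminf$ inequality. Part (2) follows in the same manner from Corollary \ref{ordertop}(2) and the analogous description of convergence in $\mathcal{O}_<$.

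For (3), the assumption $f \in S_u$ gives $\sigma(f)=\tau(f)$, and since $S_u \subseteq S \cap S'$ we may apply both (1) and (2):
\begin{equation*}
\liminf_\alpha \sigma(f_\alpha) \ge \sigma(f) = \tau(f) \ge \limsup_\alpha \tau(f_\alpha).
\end{equation*}
Because $f_\alpha \in S \cap S'$, the basic inequality $\sigma(f_\alpha) \le \tau(f_\alpha)$ holds for every $\alpha$, so
\begin{equation*}
\limsup_\alpha \sigma(f_\alpha) \le \limsup_\alpha \tau(f_\alpha) \le \sigma(f) \le \liminf_\alpha \sigma(f_\alpha),
\end{equation*}
which forces $\lim_\alpha \sigma(f_\alpha) = \sigma(f)$, and symmetrically $\lim_\alpha \tau(f_\alpha) = \tau(f)$. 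I do not expect any real obstacle here; the only point that must be stated carefully is the equivalence between net convergence in the one-sided order topologies and the $\liminf$/$\limsup$ formulation, after which everything reduces to Corollary \ref{ordertop} and the trivial chain $\sigma(f_\alpha) \le \tau(f_\alpha)$.
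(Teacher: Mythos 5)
Your proposal is correct and follows essentially the same route as the paper: both deduce (1) and (2) from the order-topology continuity in Corollary \ref{ordertop} together with the characterization of net convergence in $\mathcal{O}_>$ and $\mathcal{O}_<$ via $\liminf$ and $\limsup$, and both obtain (3) by sandwiching with $\sigma(f_\alpha)\le\tau(f_\alpha)$ and $\sigma(f)=\tau(f)$.
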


\begin{proof} By assumption, $f_\alpha \rightarrow f$ in $(C,\mathcal{T})$, which by the requirement in (1) is the same as $f_\alpha \rightarrow f$ in $(S,\mathcal{T}_S)$. According to Corollary \ref{ordertop}
$\sigma:(S,\mathcal{T}_S) \rightarrow (\mathbb{R}, \mathcal{O}_>)$ is continuous at every point $f \in S$.
Consequently $\sigma(f_\alpha) \rightarrow \sigma(f)$ in $(\mathbb{R}, \mathcal{O}_>)$. Now, a net $(y_\alpha)$ converges in $(\mathbb{R},\mathcal{O}_>)$ to
$y$
if and only if $\liminf_{\alpha} y_\alpha \ge y$, which gives (1).
In the same way one obtains (2) upon noticing that $y_\alpha \rightarrow y$ in $(\mathbb{R},\mathcal{O}_<)$
if and only if $\limsup_{\alpha} y_\alpha \le y$.
Finally, (3) follows from (1) and (2), because $\sigma \le \tau$ and therefore
$$
 \sigma(f) \le \liminf_{\alpha} \sigma(f_\alpha) \le \limsup_{\alpha} \sigma(f_\alpha) \le \limsup_{\alpha} \tau(f_\alpha) \le \tau(f) = \sigma(f).
$$
This shows that $\sigma(f_\alpha) \rightarrow \sigma(f)$. Similarly
$$
 \sigma(f) \le \liminf_{\alpha} \sigma(f_\alpha) \le \liminf_{\alpha} \tau(f_\alpha) \le \limsup_{\alpha} \tau(f_\alpha) \le \tau(f) = \sigma(f)
$$
resulting in $\tau(f_\alpha) \rightarrow \tau(f)$.
\end{proof}

\vspace{0.5cm}
Semi-continuity of $\sigma$ and $\tau$ as stated in Proposition \ref{semi} and its reformulations in Corollaries \ref{ordertop} and \ref{netconv} turn out to be
a very strong tool for proving so-called Argmin theorems in probability and statistics.\\

Occasionally it is stated in the literature that $\sigma$ or $\tau$ are actually continuous with respect to the natural topology $\mathcal{O}_n$ on $\mathbb{R}$. But the following examples
shows that this is not true.\\

\begin{example} Consider
\begin{equation*}
 f(t) = \left\{ \begin{array}{l@{\quad,\quad}l}
                 0 & |t| \le 1\\ |t|-1 & |t|>1
               \end{array} \right.
\end{equation*}
and for every $n \in \mathbb{N}$ let
\begin{equation*}
 f_n(t) = \left\{ \begin{array}{l@{\quad,\quad}l}
                 f(t) & t<0 \text{ or } t>1+\frac{1}{n}\\ \frac{1}{n+1} t & t \in [0,1+\frac{1}{n}].
               \end{array} \right.
\end{equation*}
Obviously, $f$ and $f_n, n \in \mathbb{N}$ are convex and $f_n$ converges at every point (actually uniformly on $\mathbb{R}$) to $f$.
However, $\tau(f_n)=0$ for all $n \in \mathbb{N}$, whereas $\tau(f)=1$ and consequently $\tau(f_n) \not\rightarrow \tau(f)$.
Thus $\tau$ is not continuous at $f$ and from $\sigma(f)=-\tau(f_{-})$ we infer that $\sigma$ is not continuous at $f_{-}$.
\end{example}

\vspace{0.5cm}
Note that the limit function in our example has no unique minimizing point. So let us consider the family $S_u$ in (\ref{Su}) of all functions $f$ with a unique minimizer. Clearly it holds that $S_u \subseteq S \cap S'$ and that the functionals $\sigma$ and $\tau$ coincide on $S_u$.
Therefore, from Remark \ref{U} we can infer that $\sigma$ is lower- and
upper-semicontinuous on the subspace $(S_u,\mathcal{T}_{S_u})$, whence $\sigma$ is continuous on $(S_u,\mathcal{T}_{S_u})$ with respect to the natural topology $\mathcal{O}_n$ on $\mathbb{R}$. We note this in the following\\

\begin{corollary} \label{cont} $\sigma = \tau$ on $S_u$ and
$$
 \sigma:(S_u,\mathcal{T}_{S_u}) \rightarrow (\mathbb{R},\mathcal{O}_n)
$$
is continuous.
\end{corollary}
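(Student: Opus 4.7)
The equality $\sigma = \tau$ on $S_u$ is immediate from the definitions: when $f$ has a unique minimizer $m$, the set $A(f)$ equals $\{m\}$, so $\min A(f) = \max A(f) = m$. The substance of the corollary therefore lies in the continuity assertion, and my plan is to deduce it from the two semi-continuity statements already packaged in Remark \ref{U} by exploiting a simple topological identity.

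The starting point is the observation that the natural topology $\mathcal{O}_n$ on $\mathbb{R}$ is the join of the two order topologies $\mathcal{O}_<$ and $\mathcal{O}_>$, since every bounded open interval decomposes as $(a,b) = (-\infty,b) \cap (a,\infty)$. Consequently, a map from any topological space into $(\mathbb{R},\mathcal{O}_n)$ is continuous if and only if it is continuous both into $(\mathbb{R},\mathcal{O}_<)$ and into $(\mathbb{R},\mathcal{O}_>)$. With this reduction in hand, I would first use $S_u \subseteq S$ and apply Remark \ref{U} with $U = S_u$ to conclude that $\sigma:(S_u,\mathcal{T}_{S_u}) \to (\mathbb{R},\mathcal{O}_>)$ is continuous. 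Symmetrically, using $S_u \subseteq S'$, the same remark applied to $\tau$ gives continuity of $\tau:(S_u,\mathcal{T}_{S_u}) \to (\mathbb{R},\mathcal{O}_<)$. Since $\sigma = \tau$ on $S_u$, this last statement equivalently says that $\sigma$ is continuous into $(\mathbb{R},\mathcal{O}_<)$, and combining with the previous one yields the desired continuity into $(\mathbb{R},\mathcal{O}_n)$.

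There is essentially no obstacle here; all the analytical work has already been carried out in Theorem \ref{location} and Proposition \ref{semi}. The only moment requiring care is the indirect route by which upper semi-continuity of $\sigma$ on $S_u$ is obtained — not from the $\sigma$-side alone but by combining the $\tau$-side with the coincidence $\sigma = \tau$. This detour is indispensable in view of the preceding example, which showed that without the uniqueness hypothesis the argmin functional can genuinely fail to be continuous in the natural topology.
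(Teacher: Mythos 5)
Your proposal is correct and follows essentially the same route as the paper: the paper likewise notes $S_u \subseteq S \cap S'$ and $\sigma=\tau$ on $S_u$, invokes Remark \ref{U} to get lower semi-continuity of $\sigma$ and upper semi-continuity of $\tau$ on the subspace, and combines the two to obtain continuity in $\mathcal{O}_n$. Your explicit remark that $\mathcal{O}_n$ is the join of $\mathcal{O}_<$ and $\mathcal{O}_>$ is just a slightly more formal phrasing of the same step.
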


\vspace{0.5cm}
Let $S^*:=S \cap S'=\{f:\mathbb{R}\rightarrow \mathbb{R}; f \text{ convex with }A(f) \text{ is a compact interval}\}$ and let $\xi:S^* \rightarrow \mathbb{R}$ be any measurable selection of $A$, i.e., $\xi(f) \in A(f)$ for every $f \in S^*$ and measurability refers to the trace $\mathcal{C}_{S^*}=S^* \cap \mathcal{C}$. Assume that $(f_\alpha) \subseteq S^*$ converges pointwise to $f\in S^*$. Since $\sigma(f_\alpha) \le \xi(f_\alpha) \le \tau(f_\alpha)$ for all $\alpha \in I$, it follows from the above Corollary \ref{netconv} (and the characterization of net-convergence in the order topologies, confer the proof of Corollary \ref{netconv}) that
$$
\xi(f_\alpha) \rightarrow \sigma(f) \;\text{ in } \; (\mathbb{R}, \mathcal{O}_>) \quad  \text{and } \; \xi(f_\alpha) \rightarrow \tau(f) \;\text{ in } \; (\mathbb{R},\mathcal{O}_<).
$$
If $f \in S_u$, then $\sigma(f)=\tau(f)$, whence $\xi(f_\alpha) \rightarrow \sigma(f)$ in $(\mathbb{R}, \mathcal{O}_>)$ and in $(\mathbb{R}, \mathcal{O}_<)$, which, as we know, is the same as $\xi(f_\alpha) \rightarrow \sigma(f)= \tau(f)$ in the natural topology $\mathcal{O}_n$. In particular, every measurable selection of $A$ is continuous on the subspace $S_u$ with limit $\sigma = \tau$.
We see here, and will see it another time later, that the class $S_u$ of convex functions with unique minimization point plays a special role.\\

\begin{lemma} \label{Sumb} $S_u \in \mathcal{C}_{S^*}$.
\end{lemma}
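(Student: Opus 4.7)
The plan is to characterize $S_u$ as a level set of a measurable function built from $\sigma$ and $\tau$, then invoke the measurability half of Proposition \ref{semi}. Observe that an element $f \in C$ has a unique minimizer if and only if $A(f)$ is nonempty with $\inf A(f) = \sup A(f)$; in particular $S_u \subseteq S^*$ and
$$
S_u = \{f \in S^*: \sigma(f) = \tau(f)\}.
$$
Since $\sigma \le \tau$ on $S^*$, the complement within $S^*$ is $\{f \in S^*: \sigma(f) < \tau(f)\}$, and it suffices to show that this set belongs to $\mathcal{C}_{S^*}$.

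Proposition \ref{semi}(i) gives that $\sigma: S \to \mathbb{R}$ is $\mathcal{C}_S$-Borel measurable, and Proposition \ref{semi}(ii) gives the analogous statement for $\tau$ on $S'$. Because $S^* = S \cap S'$ and the trace $\sigma$-algebra satisfies $\mathcal{C}_{S^*} = S^* \cap \mathcal{C}_S = S^* \cap \mathcal{C}_{S'}$, the restrictions $\sigma|_{S^*}$ and $\tau|_{S^*}$ are both $\mathcal{C}_{S^*}$-Borel measurable. I would then insert rationals: by density of $\mathbb{Q}$ in $\mathbb{R}$,
$$
\{f \in S^*: \sigma(f) < \tau(f)\} = \bigcup_{q \in \mathbb{Q}} \Bigl(\{f \in S^*: \sigma(f) < q\} \cap \{f \in S^*: \tau(f) > q\}\Bigr).
$$
Each set on the right is a preimage of a Borel subset of $\mathbb{R}$ under $\sigma|_{S^*}$ or $\tau|_{S^*}$ and therefore lies in $\mathcal{C}_{S^*}$. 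The countable union of such sets stays in $\mathcal{C}_{S^*}$, so its complement $S_u$ is in $\mathcal{C}_{S^*}$ as claimed.

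There is no genuine obstacle; the one point that deserves attention is bookkeeping with the trace $\sigma$-algebras, namely verifying that the $\mathcal{C}_S$-measurability of $\sigma$ really does entail $\mathcal{C}_{S^*}$-measurability of its restriction, and similarly for $\tau$. Once this is pinned down, the rational-separation argument is automatic. (Equivalently, one could simply note that $\tau|_{S^*} - \sigma|_{S^*}$ is $\mathcal{C}_{S^*}$-measurable and write $S_u = (\tau - \sigma)^{-1}(\{0\})$.)
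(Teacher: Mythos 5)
Your proof is correct and follows essentially the same route as the paper: restrict $\sigma$ and $\tau$ to $S^*$ (the paper cites Remark \ref{U} for this, which is exactly the trace-$\sigma$-algebra bookkeeping you carry out by hand), and then express $S_u=\{f\in S^*:\sigma(f)=\tau(f)\}$ as a measurable set. The paper does this in one line as $(\sigma-\tau)^{-1}(\{0\})$, which is precisely your closing parenthetical remark; the rational-separation argument is a slightly longer but equivalent way of saying the same thing.
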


\begin{proof} 
Remark \ref{U} says that $\sigma:(S^*, \mathcal{C}_{S^*}) \rightarrow \mathbb{R}$ and $\tau:(S^*, \mathcal{C}_{S^*}) \rightarrow \mathbb{R}$  are Borel measurable. Infer from $S_u \subseteq S^*$ that $S_u=\{f \in S^*: \sigma(f)=\tau(f)\}=(\sigma-\tau)^{-1}(\{0\}) \in \mathcal{C}_{S^*}$.
\end{proof}

In addition to the topology $\mathcal{T}$ of pointwise convergence, let $C$ also be endowed with the topology $\mathcal{T}_{uc}$ of uniform convergence on compacta. It is well-known that $\mathcal{T} \subseteq \mathcal{T}_{uc}$, because uniform convergence on compacta implies pointwise convergence. From Theorem 10.8 of Rockafellar \cite{Rockafellar} we know that on $C$ the reverse is true. Notice that this is valid only for
sequences. Thus the identity $i:(C,\mathcal{T}) \rightarrow (C,\mathcal{T}_{uc})$ is sequentially continuous at every $f \in C$.
Unfortunately, in general topological spaces sequential continuity does not imply continuity, which in turn would give
$\mathcal{T}_{uc} = i^{-1}(\mathcal{T}_{uc}) \subseteq \mathcal{T}$ as desired. In fact the implication is true, if
the space is first countable, confer Theorem 7.1.3 in Singh \cite{Singh}. At this stage, however, we do not know whether first countability holds for $(C,\mathcal{T})$. So, we will prove continuity of $i:(C,\mathcal{T}) \rightarrow (C,\mathcal{T}_{uc})$ traditionally via net-convergence,
confer Theorem 4.2.6 in Singh \cite{Singh}. Theorem \ref{uniformnet} below on net-convergence is not only the key to success, but above all interesting in itself when compared with the sequential convergence occurring in Theorem 10.8 of Rockafellar \cite{Rockafellar}.
The proof of Theorem \ref{uniformnet} is based on the following inequality.\\

\begin{lemma} \label{inequality}
Let $D$ be a dense subset of $\mathbb{R}$. Then for every compact set $K \subseteq \mathbb{R}$ there exist a constant $C$ and points $d_1,\ldots,d_8 \in D$ such that for each convex function $f:\mathbb{R} \rightarrow \mathbb{R}$ it follows:
\begin{equation} \label{Lip}
 |f(s)-f(t)| \le C \sum_{i=1}^8 |f(d_i)| |s-t| \quad \forall \; s,t \in K.
\end{equation}
\end{lemma}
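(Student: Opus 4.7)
The plan is to exploit the three-slope (slope-monotonicity) inequality for convex functions and produce a Lipschitz estimate on $K$ whose constant is a linear combination of the values of $f$ at a few fixed flanking points chosen from $D$.

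First I would set $m:=\min K$ and $M:=\max K$ (which exist by compactness of $K$) and, using density of $D$, fix two points $d_1<d_2<m$ and two points $M<d_3<d_4$ in $D$; these four points depend on $K$ and $D$ alone, not on $f$. Next, for arbitrary $s,t\in K$ with $s<t$, the chain $d_1<d_2<s<t<d_3<d_4$ is strict, and iterating the three-slope inequality (monotonicity of slopes along an ascending tuple, e.g.\ Theorem~1.3.1 in Niculescu and Persson \cite{Niculescu}) sandwiches the central difference quotient:
\[
\frac{f(d_2)-f(d_1)}{d_2-d_1}\;\le\;\frac{f(t)-f(s)}{t-s}\;\le\;\frac{f(d_4)-f(d_3)}{d_4-d_3}.
\]
Taking absolute values and using $|f(d_j)-f(d_i)|\le|f(d_i)|+|f(d_j)|$, the middle quotient is bounded by a fixed constant (depending only on the gaps $d_2-d_1$ and $d_4-d_3$) times $\sum_{i=1}^{4}|f(d_i)|$. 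Multiplying by $|t-s|$ delivers (\ref{Lip}) for any $s,t\in K$ (the case $s=t$ is trivial and $s>t$ follows by symmetry), and the eight-point version stated in the lemma follows by padding the list with four further arbitrary points $d_5,\dots,d_8\in D$ and enlarging the sum.

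The main potential obstacle is arranging the strict separation $d_2<m$ and $M<d_3$ so that the three-slope chain really does hold for \emph{every} pair in $K$; this is settled at once by density of $D$. Beyond that, the argument uses nothing about $f$ other than convexity, so both the evaluation points $d_i$ and the constant $C$ are chosen uniformly in the class $C$ of convex functions, which is exactly what is needed for the subsequent net-convergence arguments.
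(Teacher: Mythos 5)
Your proof is correct, and it takes a recognizably different (and somewhat leaner) route than the paper's. The paper first invokes Theorem~1.3.7 of Niculescu and Persson to get $|f(s)-f(t)|\le L|s-t|$ on an interval $[a,b]\supseteq K$ with $a,b\in D$ and $L=\max\{|D^+f(a)|,|D^-f(b)|\}$, and then spends the rest of the argument bounding the two one-sided derivatives by difference quotients over points of $D$ flanking $a$ and $b$; the double appearance of $|f(a)|$ and $|f(b)|$ in those bounds is what produces the eight (non-distinct) evaluation points. You bypass the one-sided derivatives entirely: by sandwiching the chord slope $\frac{f(t)-f(s)}{t-s}$ between the two outer chord slopes $\frac{f(d_2)-f(d_1)}{d_2-d_1}$ and $\frac{f(d_4)-f(d_3)}{d_4-d_3}$ via iterated slope monotonicity over the chain $d_1<d_2\le s<t\le d_3<d_4$, and noting that a quantity squeezed between two numbers is bounded in modulus by the larger of their moduli, you get the Lipschitz bound with constant $\max\{(d_2-d_1)^{-1},(d_4-d_3)^{-1}\}\sum_{i=1}^4|f(d_i)|$, needing only four genuine points; padding to eight is harmless since the extra summands are non-negative. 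Both arguments rest on the same underlying fact (monotonicity of difference quotients of convex functions) and on the same idea of choosing flanking points from $D$ uniformly over the convex class, so the uniformity needed for the net arguments is preserved either way; your version is more self-contained in that it does not route through the cited Lipschitz theorem or through $D^{\pm}f$, and it makes transparent why four points suffice.
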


\begin{proof} First, find points $a$ and $b$ from $D$ such that $K \subseteq [a,b]$.
By Theorem 1.3.7 in Niculescu and Persson \cite{Niculescu} we have that:
$$
 |f(s)-f(t)| \le L |s-t| \quad \forall \; s,t \in [a,b],
$$
where $L= \max\{|D^+f(a)|,|D^-f(b)|\}$. Now,
\begin{equation} \label{a+}
 D^+f(a)=\inf_{x>a}\frac{f(x)-f(a)}{x-a} \le \frac{f(x)-f(a)}{x-a} \le (x-a)^{-1}(|f(x)|+|f(a)|) \quad \forall \; x>a
\end{equation}
and since $D^+f(a) \ge D^-f(a)= \sup_{y<a}\frac{f(y)-f(a)}{y-a}$ it follows that
\begin{equation} \label{a-}
 D^+f(a) \ge \frac{f(y)-f(a)}{y-a} \ge (y-a)^{-1}(|f(y)|+|f(a)|) \quad \forall \; y<a.
\end{equation}
Next, in (\ref{a+}) and in (\ref{a-}) we can choose the points $x$ and $y$ from $D$.
Put $C_1:=\max\{(x-a)^{-1},(a-y)^{-1}\} \in (0,\infty)$. Then $|D^+f(a)|\le C_1 (|f(x)|+|f(a)|+|f(y)|+|f(a)|)$.
Similarly one obtains:  $|D^-f(b)|\le C_2 (|f(u)|+|f(b)|+|f(v)|+|f(b)|)$ with positive constant $C_2$ and points
$u \in D, u<b$ and $v \in D, v >b$. Finally, if we put $C:=\max\{C_1,C_2\}$, then
$L \le C (|f(x)|+|f(a)|+|f(y)|+|f(a)|+|f(u)|+|f(b)|+|f(v)|+|f(b)|)$, which shows (\ref{Lip}).
\end{proof}

\vspace{0.5cm}
To state our next result recall that $(I,\le)$ is a directed set.\\

\begin{theorem} \label{uniformnet}
Let $D$ be dense in $\mathbb{R}$ and let $(f_\alpha)_{\alpha \in I}$ be a net in $C$, which converges pointwise on $D$ to a function $f$, that is $f_\alpha(t) \rightarrow f(t)$ for all $t \in D$. Then $f$ is convex and
$(f_\alpha)$ converges uniformly to $f$ on every compact subset of $\mathbb{R}$.
\end{theorem}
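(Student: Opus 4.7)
The plan is to use Lemma \ref{inequality} as the key lever. That lemma converts pointwise control of a convex function at eight specific points of $D$ into a global Lipschitz bound on any prescribed compact set. Applying it to the family $(f_\alpha)$ therefore yields an \emph{eventual} uniform Lipschitz bound on each compact $K$, from which both the extension of $f$ from $D$ to $\mathbb{R}$ and uniform convergence on compacta follow by standard $\varepsilon$-net arguments that do not require any subsequence extraction.

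In detail, I would first fix a compact $K \subseteq \mathbb{R}$ and invoke Lemma \ref{inequality} to produce $d_1,\ldots,d_8 \in D$ and $C > 0$ such that $|g(s)-g(t)| \le C \sum_{i=1}^{8} |g(d_i)| \cdot |s-t|$ for every $g \in C$ and all $s, t \in K$. Since $f_\alpha(d_i) \to f(d_i)$ in $\mathbb{R}$ for each $i$, there is an index $\alpha_0 \in I$ beyond which $|f_\alpha(d_i)| \le |f(d_i)| + 1$ for all $i$. Hence, setting $L_K := C \sum_{i=1}^{8} (|f(d_i)|+1)$, the functions $f_\alpha$ with $\alpha \ge \alpha_0$ are uniformly $L_K$-Lipschitz on $K$. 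Passing to the pointwise limit in $|f_\alpha(s)-f_\alpha(t)| \le L_K |s-t|$ for $s, t \in D \cap K$ shows that $f|_{D \cap K}$ is itself $L_K$-Lipschitz; by density of $D$ and completeness of $\mathbb{R}$, $f|_D$ then extends uniquely to a continuous function on $\mathbb{R}$, which I continue to denote by $f$.

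For the uniform convergence on $K$, given $\varepsilon > 0$ I would cover $K$ by finitely many open intervals of radius $\varepsilon/(3L_K)$ centered at points $d_1',\ldots,d_N' \in D$ (using density of $D$). Choose $\alpha_1 \ge \alpha_0$ with $|f_\alpha(d_j') - f(d_j')| < \varepsilon/3$ for every $\alpha \ge \alpha_1$ and every $j \le N$; this is possible because only finitely many scalar limits are involved. For any $x \in K$, picking the nearest $d_j'$ and combining the equi-Lipschitz bound for $f_\alpha$ with the $L_K$-Lipschitz bound for $f$ gives $|f_\alpha(x) - f(x)| \le L_K|x-d_j'| + |f_\alpha(d_j')-f(d_j')| + L_K|d_j'-x| < \varepsilon$. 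This proves $f_\alpha \to f$ uniformly on $K$. Pointwise convergence on $\mathbb{R}$ is an immediate corollary, and convexity of $f$ then follows by passing to the limit in $f_\alpha(\lambda s + (1-\lambda)t) \le \lambda f_\alpha(s) + (1-\lambda) f_\alpha(t)$.

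The main obstacle in extending Rockafellar's sequential Theorem 10.8 to nets is the unavailability of subsequence extraction; Lemma \ref{inequality} resolves it by reducing the entire analytical content to control of only finitely many scalar quantities, for which the net notion of "eventually" is as good as the sequential one.
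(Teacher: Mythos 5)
Your argument is correct and rests on the same key input as the paper's proof, namely Lemma \ref{inequality}, which both of you use to extract an eventual uniform Lipschitz constant $L_K$ for the tail $\{f_\alpha:\alpha\ge\alpha_0\}$ on each compact $K$. Where you diverge is in the concluding step. The paper feeds equicontinuity and pointwise boundedness into the Arzel\`{a}--Ascoli theorem, obtains relative compactness of the tail in the space of continuous functions on $K$, and then concludes via the subnet criterion: every subnet admits a further subnet converging uniformly to some $g$, which must agree with $f$ by pointwise convergence, so the whole net converges uniformly to $f$. You instead give a direct $\varepsilon/3$-estimate against a finite net of points of $D$, which avoids Arzel\`{a}--Ascoli and all subnet extraction; this is more elementary and self-contained, and it buys you a cleaner treatment of a point the paper glosses over, namely that the hypothesis only determines $f$ on $D$, so that $f$ off $D$ must first be recovered as the (locally Lipschitz) continuous extension before uniform convergence on $K$ is even meaningful. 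Two small things to tidy: first, the bound from Lemma \ref{inequality} is asserted only for $s,t\in K$, whereas your centers $d_j'$ need not lie in $K$; apply the lemma instead to a compact neighbourhood $K'=\{x:\operatorname{dist}(x,K)\le 1\}\supseteq K$ and choose the $d_j'\in K'\cap D$, after which the estimate $|f_\alpha(x)-f_\alpha(d_j')|\le L_{K'}|x-d_j'|$ goes through verbatim. Second, your choice of a single $\alpha_0$ with $|f_\alpha(d_i)|\le|f(d_i)|+1$ for all eight $i$ simultaneously uses directedness of $I$ to dominate the eight individual indices; this is exactly the step the paper also makes, and is worth saying explicitly since it is the only place where the net structure enters.
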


\begin{proof} Convexity of $f$ follows from the convexity of $f_\alpha$ by taking the limit.
As to the second assertion let $K \subseteq \mathbb{R}$ be compact. By Lemma \ref{inequality} there
are a constant $C$ and $d_1,\ldots,d_8 \in \mathbb{R}$ such that
\begin{equation} \label{equicont}
 |f_\alpha(s)-f_\alpha(t)| \le C \sum_{i=1}^8 |f_\alpha(d_i)| |s-t| \quad \forall \; s,t \in K \quad \forall \; \alpha \in I.
\end{equation}

By pointwise convergence we find for every $1 \le i \le 8$ an index $\alpha_i \in I$ and a constant $c_i \in \mathbb{R}$ such that $|f_\alpha(d_i)| \le c_i$ for all $\alpha \ge \alpha_i$. To $\alpha_1,\ldots,\alpha_8$ there exist a dominating index $\alpha^* \in I$ with $\alpha^* \ge \alpha_1,\ldots,\alpha_8$. It follows that $C \sum_{i=1}^8 |f_\alpha(d_i)| \le C \sum_{i=1}^8 c_i =:L \in [0,\infty)$ for all $\alpha \ge \alpha^*$, whence by
(\ref{equicont}) the family $\mathcal{F}:=\{f_\alpha: \alpha^* \le \alpha \in I\}$ is equicontinuous. Furthermore,
$$
 |f_\alpha(t)|\le |f_\alpha(t)-f_\alpha(d_1)|+|f_\alpha(d_1)|\le L|t-d_1|+c_1\quad \forall \alpha \ge \alpha^*\ge \alpha_1,
$$
whence $\mathcal{F}$ is pointwise bounded. Thus by the Arzel\`{a}-Ascoli theorem, confer, e.g., Heuser \cite{Heuser}, the family $\mathcal{F}$ is compact. Therefore,  if $(f_{\alpha'})$ is a subnet of $(f_\alpha)_{\alpha^* \le \alpha \in I}$, then there exists a further subnet $(f_{\alpha''})$ of $(f_{\alpha'})$, which converges to a function $g$ uniformly on $K$. In particular, $f_{\alpha''}(t) \rightarrow g(t)$ for all $t \in K$. But by the assumption of pointwise convergence we also know that $f_{\alpha''}(t) \rightarrow f(t)$ for all $t \in K$. Thus $g=f$ on $K$ and by the subnet-criterion it follows that $(f_\alpha)_{\alpha^* \le \alpha \in I}$ converges to $f$ uniformly on $K$, which a fortiori holds for the entire
net $(f_\alpha)_{\alpha \in I}$.
\end{proof}

Infer from the above Theorem \ref{uniformnet} that if a net $f_\alpha \rightarrow f$ in $(C,\mathcal{T})$, then $f_\alpha \rightarrow f$ in $(C,\mathcal{T}_{uc})$. Obviously, the reverse is true as well. As a consequence we obtain\\

\begin{corollary} \label{puc} The topology of pointwise convergence and the topology of uniform convergence on compacta coincide on $C$:
$$\mathcal{T} = \mathcal{T}_{uc}.$$
\end{corollary}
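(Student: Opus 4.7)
The plan is to establish the two inclusions $\mathcal{T} \subseteq \mathcal{T}_{uc}$ and $\mathcal{T}_{uc} \subseteq \mathcal{T}$ separately, working throughout with the net-theoretic characterization of continuity in general topological spaces (Theorem 4.2.6 in Singh \cite{Singh}). The first inclusion is routine and does not depend on convexity: since uniform convergence on compacta implies pointwise convergence, every net $f_\alpha \to f$ in $(C,\mathcal{T}_{uc})$ also converges in $(C,\mathcal{T})$. Hence the identity $j:(C,\mathcal{T}_{uc}) \to (C,\mathcal{T})$ preserves net-limits and is therefore continuous, which forces $\mathcal{T} = j^{-1}(\mathcal{T}) \subseteq \mathcal{T}_{uc}$.

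For the nontrivial inclusion $\mathcal{T}_{uc} \subseteq \mathcal{T}$ I would show that the reverse identity $i:(C,\mathcal{T}) \to (C,\mathcal{T}_{uc})$ is continuous. So let $(f_\alpha)_{\alpha \in I}$ be an arbitrary net in $C$ with $f_\alpha \to f$ in $(C,\mathcal{T})$, i.e. $f_\alpha(t) \to f(t)$ for every $t \in \mathbb{R}$. Applying Theorem \ref{uniformnet} to the dense set $D = \mathbb{R}$ immediately yields that $f$ is convex and that $f_\alpha \to f$ uniformly on every compact subset of $\mathbb{R}$, which is exactly $i(f_\alpha) \to i(f)$ in $(C,\mathcal{T}_{uc})$. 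By the same net criterion $i$ is continuous, giving $\mathcal{T}_{uc} = i^{-1}(\mathcal{T}_{uc}) \subseteq \mathcal{T}$ and thereby the desired equality.

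The main obstacle has already been overcome in Theorem \ref{uniformnet}, whose point was precisely to provide a net-theoretic upgrade of Rockafellar's sequential result (Theorem 10.8 in \cite{Rockafellar}) without appealing to first countability of $(C,\mathcal{T})$; this was achieved via the Lipschitz-type estimate of Lemma \ref{inequality} combined with an Arzel\`a--Ascoli compactness argument. Once that input is in place, the present corollary is just two applications of the net characterization of continuity and carries no further difficulty.
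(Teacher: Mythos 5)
Your proof is correct and follows essentially the same route as the paper: the inclusion $\mathcal{T}\subseteq\mathcal{T}_{uc}$ is the standard one, and the nontrivial inclusion $\mathcal{T}_{uc}\subseteq\mathcal{T}$ is obtained by checking net-continuity of the identity $i:(C,\mathcal{T})\rightarrow(C,\mathcal{T}_{uc})$ via Theorem \ref{uniformnet} (applied with $D=\mathbb{R}$), exactly as the paper does in the passage preceding the corollary. No gaps.
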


\begin{remark} \label{match}
Let $\mathcal{T}(D)$ be the topology of pointwise convergence on $D$. It is generated by the projections $\pi_t, t \in D$.
If $D$ is dense in $\mathbb{R}$, then Theorem \ref{uniformnet} actually yields that $\mathcal{T}(D)=\mathcal{T}_{uc}=\mathcal{T}$. So all the topologies match.
\end{remark}

\vspace{0.5cm}
The observation in Remark \ref{match} leads to the following variant of the semi-continuity.\\

\begin{corollary} \label{dense}
Let $D$ be dense in $\mathbb{R}$. If in Corollary \ref{netconv} the assumption is replaced by $(f_\alpha)_{\alpha \in I} \subseteq C$ converges pointwise to $f$ \textbf{on} $\textbf{D}$,
then all statements of Corollary \ref{netconv} remain valid.
\end{corollary}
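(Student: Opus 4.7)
The plan is to reduce Corollary \ref{dense} directly to Corollary \ref{netconv} by upgrading pointwise convergence on $D$ to pointwise convergence on all of $\mathbb{R}$, which is legal precisely because every $f_\alpha$ (and the limit) is convex.

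First I would invoke Theorem \ref{uniformnet}: since $(f_\alpha)_{\alpha\in I}\subseteq C$ converges pointwise on the dense set $D$ to $f$, the theorem guarantees that $f$ is convex and that $(f_\alpha)$ converges to $f$ uniformly on every compact subset of $\mathbb{R}$. In particular, $f_\alpha(t)\to f(t)$ for every $t\in\mathbb{R}$, i.e.\ we recover pointwise convergence on all of $\mathbb{R}$. Equivalently, $f_\alpha\to f$ in $(C,\mathcal{T})$, as noted in Remark \ref{match}.

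Once pointwise convergence on $\mathbb{R}$ is in hand, statements (1), (2) and (3) of Corollary \ref{netconv} can be applied verbatim under the respective hypotheses $(f_\alpha)\subseteq S$ and $f\in S$, $(f_\alpha)\subseteq S'$ and $f\in S'$, and $(f_\alpha)\subseteq S\cap S'$ and $f\in S_u$. This yields $\liminf_\alpha\sigma(f_\alpha)\ge\sigma(f)$, $\limsup_\alpha\tau(f_\alpha)\le\tau(f)$, and in the unique-minimizer case $\lim_\alpha\sigma(f_\alpha)=\lim_\alpha\tau(f_\alpha)=\sigma(f)=\tau(f)$, respectively.

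There is essentially no obstacle: the only non-trivial ingredient is Theorem \ref{uniformnet}, which has already been established, and whose whole purpose is to allow density arguments like this one. The subtle point worth flagging in the write-up is that without convexity, pointwise convergence on a dense subset would be far too weak to imply pointwise convergence everywhere; it is the equicontinuity/locally Lipschitz bound from Lemma \ref{inequality}, packaged inside Theorem \ref{uniformnet}, that makes the reduction work.
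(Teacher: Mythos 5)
Your proposal is correct and follows essentially the same route as the paper, which presents Corollary \ref{dense} as an immediate consequence of Remark \ref{match} (i.e., of Theorem \ref{uniformnet}): pointwise convergence on the dense set $D$ is upgraded to pointwise convergence on all of $\mathbb{R}$ (indeed to convergence in $\mathcal{T}_{uc}$), after which Corollary \ref{netconv} applies verbatim. Your closing remark that convexity, via the Lipschitz bound of Lemma \ref{inequality}, is what makes the density argument work is exactly the point the paper intends.
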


\vspace{0.5cm}
Let $D=\{t_i: i \in \mathbb{N}\}$ be a countable and dense subset of $\mathbb{R}$. Introduce the special projection map
$H:C \rightarrow \mathbb{R}^\mathbb{N}$ by $H(f):=(f(t_i))_{i \in \mathbb{N}}$. Note that $H$ depends on $D$, but we suppress this in
our notation.
Equip $\mathbb{R}^\mathbb{N}$ with the product topology $\Pi$.
Denote the range $H(C)$ by $R$ and the relative topology $R \cap \Pi$ by $\mathcal{R}$. With the following result we will prove
a functional limit theorem for convex stochastic processes.\\

\begin{lemma} \label{projmap} The map $H$ is a bijection onto its range and its inverse $H^{-1}:(R,\mathcal{R}) \rightarrow (C,\mathcal{T})$
is continuous.
\end{lemma}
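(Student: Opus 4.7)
The plan is to handle bijectivity and continuity of the inverse separately, reducing each to a result already established in the excerpt.

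First, for injectivity, suppose $H(f) = H(g)$ for $f, g \in C$. Then $f(t_i) = g(t_i)$ for every $i \in \mathbb{N}$, i.e., $f = g$ on $D$. Since every convex function on $\mathbb{R}$ is continuous and $D$ is dense, it follows that $f = g$ on $\mathbb{R}$. Surjectivity onto the range $R = H(C)$ is immediate from the definition of $R$. Hence $H$ is a bijection onto $R$ and $H^{-1}: R \to C$ is well defined.

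For the continuity of $H^{-1}$, I would argue via nets (using Theorem 4.2.6 in Singh \cite{Singh} as in the preceding discussion). Let $(y_\alpha)_{\alpha \in I}$ be a net in $R$ with $y_\alpha \to y$ in $(R, \mathcal{R})$, and set $f_\alpha := H^{-1}(y_\alpha) \in C$ and $f := H^{-1}(y) \in C$. By definition of the product topology $\Pi$, convergence in the relative topology $\mathcal{R}$ is coordinate-wise convergence, so $f_\alpha(t_i) \to f(t_i)$ for every $i \in \mathbb{N}$. This is precisely pointwise convergence of the net $(f_\alpha)$ to $f$ on the dense set $D$.

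Now I invoke Remark \ref{match} (equivalently, Theorem \ref{uniformnet} together with Corollary \ref{puc}): since $D$ is dense in $\mathbb{R}$, the topology $\mathcal{T}(D)$ of pointwise convergence on $D$ coincides on $C$ with $\mathcal{T}$. Therefore $f_\alpha \to f$ in $(C, \mathcal{T})$, which is exactly $H^{-1}(y_\alpha) \to H^{-1}(y)$ in $(C, \mathcal{T})$. Being sequentially/net continuous at every point of $R$, the map $H^{-1}$ is continuous.

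The only delicate step is the continuity of $H^{-1}$, but all the work has already been done in Theorem \ref{uniformnet}: without that result one would only have pointwise convergence on $D$, which is strictly weaker than convergence in $\mathcal{T}$ in a general topological function space. The rest of the proof is essentially bookkeeping about the product topology and the density of $D$.
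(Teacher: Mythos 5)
Your proof is correct and follows essentially the same route as the paper: injectivity via continuity of convex functions and density of $D$, and continuity of $H^{-1}$ by observing that convergence in $\mathcal{R}$ is coordinate-wise convergence, i.e.\ pointwise convergence on $D$, which Theorem \ref{uniformnet} upgrades to convergence in $(C,\mathcal{T})$. The only (immaterial) difference is that you argue with nets while the paper checks sequential continuity, which suffices there because $(R,\mathcal{R})$ is a subspace of the metrizable space $\mathbb{R}^{\mathbb{N}}$; your net version avoids having to invoke that metrizability.
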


\begin{proof} If $H(f)=H(g)$, then $f=g$ on $D$ and by continuity and denseness of $D$
the equality holds on the entire real line. Thus $H$ is injective, and it is surjective by construction.

As to continuity of the inverse consider a sequence $(r_n)$ with
\begin{equation} \label{rn}
 r_n  \rightarrow r \; \text{ in } \; (R,\mathcal{R}).
\end{equation}
Since $(r_n) \subseteq R$, we find to each $n \in \mathbb{N}$ a function $f_n \in C$ such that
$r_n = H(f_n)=(f_n(t_i))_{i \in \mathbb{N}}$. For the same reason there is some $f \in C$ with
$r=H(f)=(f(t_i))_{i \in \mathbb{N}}$. Recall that convergence in $\Pi$ or in $\mathcal{R}$, respectively,
is the same as coordinate-wise convergence.
Thus the convergence in (\ref{rn}) means that $f_n(t_i) \rightarrow f(t_i)$ for all $i \in \mathbb{N}$.
Since $D$ lies dense in $\mathbb{R}$ we can apply Theorem 10.8 in Rockafellar \cite{Rockafellar} (or our Theorem \ref{uniformnet}) to infer
that actually $f_n(t) \rightarrow f(t)$ for every $t \in \mathbb{R}$. Now, by definition $f_n=H^{-1}(r_n)$ and
$f=H^{-1}(r)$, whence we arrive at $H^{-1}(r_n) \rightarrow H^{-1}(r)$ in $(C,\mathcal{T})$. Consequently, $H^{-1}$ is continuous.
\end{proof}

\section{Applications in probability and statistics}
Let $(\Omega,\mathcal{A})$ be a measurable space.
For a map $Z:\Omega \rightarrow C$ we write $Z(\omega,t):=Z(\omega)(t)$ for the value of the function $Z(\omega):\mathbb{R} \rightarrow \mathbb{R}$ (\emph{trajectory}) at point $t \in \mathbb{R}$. Very often it is more convenient to write $Z(t)$ instead of $Z(\omega,t)$ for this ambiguity in the notation explains in the context. Let $\mathcal{B}:=\mathcal{B}(\mathbb{R})$ denote the Borel-$\sigma$ algebra on $\mathbb{R}$. If $Z(\cdot,t):\Omega \rightarrow \mathbb{R}$ is $\mathcal{A}-\mathcal{B}$ measurable for each $t \in \mathbb{R}$, then $Z$ is called a \emph{convex stochastic process}. This is the same as
saying that $Z(t)$ is a real random variable for all $t \in \mathbb{R}$. If $Z(\omega) \in U$ for all $\omega \in \Omega$, where $U$ is a subset of $C$, we say that $Z$ is a \emph{convex stochastic process in} $U$ or for short \emph{a process in} $U$. In other words, all trajectories of $Z$ are $U-$valued.\\

Let $\mathcal{B}(C):=\sigma(\mathcal{T})$ be the Borel-$\sigma$ algebra pertaining to the topology of pointwise convergence.
By Corollary \ref{puc} it coincides with $\mathcal{B}_{uc}(C):= \sigma(\mathcal{T}_{uc})$. The following result yields a
convenient characterization of the Borel-$\sigma$ algebra. Recall that by definition $\mathcal{C}=\sigma(\pi_t: t \in \mathbb{R})$.\\

\begin{proposition} \label{char}
$$
 \mathcal{B}(C)=\mathcal{B}_{uc}(C)=\mathcal{C}.
$$
\end{proposition}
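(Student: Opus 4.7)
The plan is to establish both equalities cleanly, with only one inclusion requiring real work. First, $\mathcal{B}(C) = \mathcal{B}_{uc}(C)$ is an immediate consequence of Corollary \ref{puc}: two coinciding topologies generate the same Borel $\sigma$-algebra. For $\mathcal{B}(C) = \mathcal{C}$ I would argue by two inclusions. The inclusion $\mathcal{C} \subseteq \mathcal{B}(C)$ is immediate, since by construction $\mathcal{T}$ is the coarsest topology on $C$ making every projection $\pi_t$ continuous, so each $\pi_t$ is in particular $\mathcal{B}(C)$-measurable, and hence $\mathcal{C} = \sigma(\pi_t : t \in \mathbb{R}) \subseteq \mathcal{B}(C)$.

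The reverse inclusion $\mathcal{B}(C) \subseteq \mathcal{C}$ is the only nontrivial point. The key observation is that $(C,\mathcal{T})$ is second countable: applying Remark \ref{match} to the countable dense set $D = \mathbb{Q}$ gives $\mathcal{T} = \mathcal{T}(\mathbb{Q})$, and combining this with a countable base of $\mathbb{R}$ shows that
\[
 \mathcal{S} := \{\pi_q^{-1}((a,b)) : q \in \mathbb{Q},\; a,b \in \mathbb{Q} \cup \{\pm\infty\},\; a<b\}
\]
is a countable subbase of $\mathcal{T}$. Consequently the collection of finite intersections of $\mathcal{S}$ forms a countable base $\mathcal{B}_0$ for $\mathcal{T}$. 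Since each element of $\mathcal{S}$ lies in $\mathcal{C}$ (the projections $\pi_q$ being $\mathcal{C}$-measurable by definition), every finite intersection lies in $\mathcal{C}$ as well, and every $\mathcal{T}$-open set, being representable as a union over a subfamily of the countable collection $\mathcal{B}_0$, is a countable union of elements of $\mathcal{C}$ and thus belongs to $\mathcal{C}$. Passing to generated $\sigma$-algebras yields $\mathcal{B}(C) = \sigma(\mathcal{T}) \subseteq \mathcal{C}$.

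The main obstacle is precisely the passage from the uncountable family $\{\pi_t : t \in \mathbb{R}\}$ to a countable generating family; the purely measure-theoretic identity $\mathcal{C}=\sigma(\pi_t : t \in \mathbb{R})$ does not by itself reveal any such family, and second countability of $(C,\mathcal{T})$ is not at all obvious from its definition as a product-like topology over the uncountable index set $\mathbb{R}$. What makes the argument go through is the topological fact that a convex function on $\mathbb{R}$ is determined by its values on a dense subset, i.e.\ Theorem \ref{uniformnet} via Remark \ref{match}, which permits both the topology and, a fortiori, the Borel $\sigma$-algebra to be encoded by the countable family $\{\pi_q : q \in \mathbb{Q}\}$.
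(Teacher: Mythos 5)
Your proof is correct, but it follows a genuinely different route from the paper's. The paper proves $\mathcal{B}_{uc}(C)=\mathcal{C}$ by embedding $C$ into the space $C^*$ of all continuous functions with the topology of uniform convergence on compacta, pulling back along the natural injection (so that $\sigma(\mathcal{T}_{uc})=C\cap\sigma(\mathcal{T}_{uc}^*)$ via a trace lemma from G\"{a}nssler--Stute), and then invoking Kallenberg's Lemma A5.1, which identifies $\sigma(\mathcal{T}_{uc}^*)$ with the $\sigma$-algebra generated by the projections on $C^*$. You instead work intrinsically on $C$: using Remark \ref{match} with $D=\mathbb{Q}$ you identify $\mathcal{T}$ with $\mathcal{T}(\mathbb{Q})$, exhibit the countable subbase $\{\pi_q^{-1}((a,b))\}$, conclude that $(C,\mathcal{T})$ is second countable, and then observe that every open set is a countable union of finite intersections of $\mathcal{C}$-measurable cylinder sets, giving $\mathcal{T}\subseteq\mathcal{C}$ and hence $\mathcal{B}(C)\subseteq\mathcal{C}$; the reverse inclusion and the equality $\mathcal{B}(C)=\mathcal{B}_{uc}(C)$ are handled as in the paper. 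Both arguments ultimately rest on the same structural fact --- that a convex (resp.\ continuous) function is controlled by countably many evaluations --- but your version is more self-contained, needing only the paper's own Theorem \ref{uniformnet}/Remark \ref{match} rather than two external references, and it makes the second countability of $(C,\mathcal{T})$ explicit, which incidentally answers the question about first countability left open in the discussion preceding Lemma \ref{inequality}. The paper's version, by contrast, recycles a standard result for the larger space $C^*$ and avoids any explicit construction of a countable base.
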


\begin{proof} Let $C^*:=\{f:\mathbb{R} \rightarrow \mathbb{R}; f \text{ continuous}\}$ be endowed with the topology $\mathcal{T}_{uc}^*$
of uniform convergence on compacta. One verifies easily that $\mathcal{T}_{uc}= C \cap \mathcal{T}_{uc}^*$. If $i:C \rightarrow C^*$ is the
natural injection into $C^*$, i.e., $i(f)=f$, then $\mathcal{T}_{uc}=i^{-1}(\mathcal{T}_{uc}^*)$, whence
$$
\sigma(\mathcal{T}_{uc})= \sigma(i^{-1}(\mathcal{T}_{uc}^*))=i^{-1}(\sigma(\mathcal{T}_{uc}^*))=C \cap \sigma(\mathcal{T}_{uc}^*),
$$
where the second equality is ensured by Lemma 1.2.5 in G\"{a}nssler and Stute \cite{Stute}.
By Lemma A5.1 in Kallenberg \cite{Kall2} we have that $\sigma(\mathcal{T}_{uc}^*) = \sigma(\pi_t^*: t \in \mathbb{R})$, where $\pi_t^*:C^* \rightarrow \mathbb{R}$ is the projection on $C^*$. But $C \cap \sigma(\pi_t^*: t \in \mathbb{R})=\sigma(\pi_t: t \in \mathbb{R})$, which gives the desired result.
\end{proof}

If $Z$ is a process in $U \subseteq C$ it can be regarded as a map $Z:\Omega \rightarrow U$ with Borel$-\sigma$ algebra $\mathcal{B}(U)=\sigma(\mathcal{T}_U)=U \cap \mathcal{B}(C)= U \cap \mathcal{C} = \mathcal{C}_U$ by Proposition \ref{char}.\\

\begin{lemma} \label{mb}
Assume that $Z$ is a convex stochastic process. Then
$Z$ is $\mathcal{A}-\mathcal{B}(C)$ measurable. If actually $Z$ is a process in $U \subseteq C$, then $Z$ is $\mathcal{A}-\mathcal{B}(U)$ measurable.
\end{lemma}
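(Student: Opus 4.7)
The plan is to reduce the claim to Proposition \ref{char}, which identifies the Borel $\sigma$-algebra $\mathcal{B}(C)$ with the $\sigma$-algebra $\mathcal{C} = \sigma(\pi_t : t \in \mathbb{R})$ generated by the evaluation maps. Once that identification is in hand, measurability with respect to $\mathcal{B}(C)$ is equivalent to measurability with respect to a $\sigma$-algebra generated by a family of functions, so the standard criterion (a map $Z$ into a space whose $\sigma$-algebra is generated by maps $\{\pi_t\}$ is measurable iff each $\pi_t \circ Z$ is measurable) applies.

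Concretely, I would first invoke Proposition \ref{char} to rewrite $\mathcal{B}(C) = \mathcal{C}$. Next, I would note that by the generator criterion it suffices to check that $Z^{-1}(\pi_t^{-1}(B)) \in \mathcal{A}$ for every $t \in \mathbb{R}$ and every $B \in \mathcal{B}$. But this preimage equals $(\pi_t \circ Z)^{-1}(B)$, and by the very definition of a convex stochastic process,
\[
(\pi_t \circ Z)(\omega) = Z(\omega)(t) = Z(\cdot,t)(\omega),
\]
which is assumed to be $\mathcal{A}-\mathcal{B}$ measurable. Hence $Z$ is $\mathcal{A}-\mathcal{B}(C)$ measurable.

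For the refinement to processes in $U \subseteq C$, I would apply the identity $\mathcal{B}(U) = U \cap \mathcal{B}(C) = U \cap \mathcal{C} = \mathcal{C}_U$ recorded in the discussion immediately preceding the lemma (itself an immediate consequence of Proposition \ref{char}). Since $Z(\omega) \in U$ for all $\omega$, one has $Z^{-1}(U \cap A) = Z^{-1}(A)$ for every $A \in \mathcal{B}(C)$, so measurability into $(U,\mathcal{B}(U))$ follows at once from the first part.

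There is no real obstacle here; the only point worth flagging is that the argument rests entirely on the nontrivial identification $\mathcal{B}(C) = \mathcal{C}$ proved in Proposition \ref{char}, without which one would need to work harder to transfer measurability from the countable family of evaluations to the topological Borel structure.
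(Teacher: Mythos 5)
Your proposal is correct and follows essentially the same route as the paper: the paper also reduces the first assertion to Proposition \ref{char} and then invokes the generator criterion (citing Proposition 1.2.11 of G\"{a}nssler and Stute for what you spell out explicitly, namely that measurability into $(C,\mathcal{C})$ with $\mathcal{C}=\sigma(\pi_t: t\in\mathbb{R})$ is equivalent to measurability of each $\pi_t\circ Z = Z(\cdot,t)$), and handles the second assertion via the trace identity $\mathcal{B}(U)=U\cap\mathcal{B}(C)$ exactly as you do.
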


\begin{proof} The first assertion follows from Proposition \ref{char}, which enables us to apply Proposition 1.2.11
in G\"{a}nssler and Stute \cite{Stute}. The second assertion follows from $\mathcal{B}(U)=U \cap \mathcal{B}(C)$ in combination with
the first assertion.
\end{proof}

Recall that $S^*=S \cap S'$ and $\xi:(S^*,\mathcal{C}_{S^*}) \rightarrow (\mathbb{R},\mathcal{B})$ denotes any measurable selection of $A$.\\

\begin{corollary} \label{mbargmin} If $Z$ is a convex stochastic process in $S$, in $S'$ or in $S^*$, then
$\sigma(Z), \; \tau(Z)$ or $\xi(Z)$, respectively, are real random variables.
\end{corollary}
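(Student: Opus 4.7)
The plan is to realize each of $\sigma(Z)$, $\tau(Z)$, $\xi(Z)$ as a composition of two maps that are already known to be measurable, so that the conclusion falls out from the standard fact that compositions of measurable maps are measurable.

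First I would record the $\sigma$-algebra on the relevant function spaces. By Lemma \ref{mb}, if $Z$ is a convex stochastic process in $U \subseteq C$, then $Z:(\Omega,\mathcal{A}) \to (U,\mathcal{B}(U))$ is measurable. I would then use $\mathcal{B}(U) = U \cap \mathcal{B}(C)$ together with Proposition \ref{char} to identify $\mathcal{B}(U) = U \cap \mathcal{C} = \mathcal{C}_U$. Specializing to $U=S$, $U=S'$ and $U=S^*$, I thus obtain three instances of $\mathcal{A}$-$\mathcal{C}_U$ measurability of $Z$.

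Second I would invoke the measurability of the three functionals. Proposition \ref{semi}(i) gives $\sigma:(S,\mathcal{C}_S) \to (\mathbb{R},\mathcal{B})$ Borel measurable, Proposition \ref{semi}(ii) gives $\tau:(S',\mathcal{C}_{S'}) \to (\mathbb{R},\mathcal{B})$ Borel measurable, and $\xi:(S^*,\mathcal{C}_{S^*}) \to (\mathbb{R},\mathcal{B})$ is Borel measurable by the defining hypothesis on the selection $\xi$. Finally, composing yields that $\sigma(Z) = \sigma \circ Z$, $\tau(Z) = \tau \circ Z$ and $\xi(Z) = \xi \circ Z$ are $\mathcal{A}$-$\mathcal{B}$ measurable, i.e., real random variables.

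There is essentially no obstacle; the only bookkeeping point worth verifying carefully is the identification $\mathcal{B}(U) = \mathcal{C}_U$ for the three subsets $U \in \{S, S', S^*\}$, which is exactly what Proposition \ref{char} together with the trace identity $\mathcal{B}(U) = U \cap \mathcal{B}(C)$ provides. Everything else is routine composition of measurable maps.
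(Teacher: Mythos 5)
Your proposal is correct and follows essentially the same route as the paper: Lemma \ref{mb} plus the identification $\mathcal{B}(U)=U\cap\mathcal{B}(C)=\mathcal{C}_U$ via Proposition \ref{char}, then composition with the measurable functionals $\sigma$, $\tau$ (Proposition \ref{semi}) and the by-definition measurable selection $\xi$. No gaps.
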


\begin{proof} Lemma \ref{mb} says that $Z:(\Omega,\mathcal{A}) \rightarrow (S,\mathcal{B}(S))$ is measurable.
But $\mathcal{B}(S)=S \cap \mathcal{B}(C)=S \cap \mathcal{C}= \mathcal{C}_S$, where the second equality holds by Proposition \ref{char}.
From Proposition \ref{semi} we know that $\sigma:(S,\mathcal{C}_S) \rightarrow (\mathbb{R},\mathcal{B})$ is measurable, whence $\sigma(Z)=\sigma \circ Z$ is measurable as composition of measurable maps. Replacing $S$ through $S'$ or $S^*$ gives measurability of $\tau(Z)$ or $\xi(Z)$, respectively.
\end{proof}

The concept of \emph{convergence in distribution} is well-known for random variables with values in a metric space.
A classical reference here is the book of Billingsley \cite{Bill}. In contrast, the extension of the concept
from metric spaces to topological spaces seems less known. It goes back to G\"{a}nssler and Stute \cite{Stute}, who in turn modify the ideas of Tops{\o}e \cite{Top}. Let $Z$ and $Z_\alpha, \alpha \in I$, be random variables defined on a probability space $(\Omega, \mathcal{A}, \mathbb{P})$ with values in some topological space $(X,\mathcal{O})$, that is $Z:\Omega \rightarrow X$ and $Z_\alpha:\Omega \rightarrow X$ are $\mathcal{A}-\mathcal{B}(X)$
measurable, where $\mathcal{B}(X):=\sigma(\mathcal{O})$ denotes the Borel-$\sigma$ algebra.
Then the net $(Z_\alpha)_{\alpha \in I}$ \emph{converges in distribution to} $Z$ \emph{in} $(X,\mathcal{O})$, if
\begin{equation} \label{open}
 \liminf_\alpha \mathbb{P}(Z_\alpha \in O) \ge \mathbb{P}(Z \in O) \quad \forall \; O \in \mathcal{O}.
\end{equation}
This is denoted by $Z_\alpha \stackrel{\mathcal{D}}{\rightarrow} Z$ in $(X,\mathcal{O})$ and by complementation is equivalent to
\begin{equation} \label{closed}
 \limsup_\alpha \mathbb{P}(Z_\alpha \in F) \le \mathbb{P}(Z \in F) \quad \forall \; F \in \mathcal{F},
\end{equation}
where $\mathcal{F}$ is the family of all closed sets in $(X,\mathcal{O})$.\\

The following result plays an important role in what follows. For this reason we like to state it here.
The proof is comparatively simple and can be found in G\"{a}nssler and Stute \cite{Stute}, p.345.\\

\begin{theorem} (\textbf{Continuous Mapping}) Let $(X,\mathcal{O})$ and $(E,\mathcal{G})$ be topological spaces,
$h:X \rightarrow E$ \;be  $\mathcal{B}(X)-\mathcal{B}(E)$ measurable and $D_h:=\{x \in X: h \text{ is discontinuous at } x \}$.
Suppose $Z$ and $Z_\alpha, \alpha \in I$, are random variables over $(\Omega, \mathcal{A},\mathbb{P})$ with values in $(X,\mathcal{O})$,
where $\mathbb{P}^*(Z \in D_h)=0$ with $\mathbb{P}^*$ the outer measure of $\mathbb{P}$. Then
$Z_\alpha \stackrel{\mathcal{D}}{\rightarrow} Z$ in $(X,\mathcal{O})$ entails $h(Z_\alpha) \stackrel{\mathcal{D}}{\rightarrow} h(Z)$ in $(E,\mathcal{G})$.
\end{theorem}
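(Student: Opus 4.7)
The plan is to verify the closed-set formulation (\ref{closed}) of $h(Z_\alpha) \stackrel{\mathcal{D}}{\to} h(Z)$. Fix an arbitrary closed $F \subseteq E$; then $F \in \mathcal{B}(E)$ and the Borel measurability of $h$ gives $h^{-1}(F) \in \mathcal{B}(X)$. The obstacle is that $h^{-1}(F)$ need not be closed in $X$, so I cannot directly apply the hypothesis $Z_\alpha \stackrel{\mathcal{D}}{\to} Z$ to it. The natural remedy is to pass to the closure $\overline{h^{-1}(F)}$, which is closed and hence a Borel set in $X$.

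The topological heart of the proof is the inclusion
$$\overline{h^{-1}(F)} \setminus h^{-1}(F) \subseteq D_h.$$
To establish it, I would pick $x \in \overline{h^{-1}(F)}$ and assume $h$ is continuous at $x$, with the aim of showing $x \in h^{-1}(F)$. Because $(X,\mathcal{O})$ is a general topological space, sequences are insufficient to characterize closure; instead I would invoke a net $(x_\beta) \subseteq h^{-1}(F)$ with $x_\beta \to x$. Continuity of $h$ at $x$ then forces $h(x_\beta) \to h(x)$ in $(E,\mathcal{G})$, and since $h(x_\beta) \in F$ with $F$ closed, the net-limit $h(x)$ lies in $F$. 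Contraposition yields the claimed inclusion.

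Assembling the estimate, the chain
$$\mathbb{P}(h(Z_\alpha) \in F) = \mathbb{P}(Z_\alpha \in h^{-1}(F)) \le \mathbb{P}(Z_\alpha \in \overline{h^{-1}(F)})$$
holds for every $\alpha$. Taking $\limsup_\alpha$ and applying the closed-set formulation of $Z_\alpha \stackrel{\mathcal{D}}{\to} Z$ to the closed Borel set $\overline{h^{-1}(F)}$ produces $\mathbb{P}(Z \in \overline{h^{-1}(F)})$ as an upper bound. Splitting this probability along the disjoint decomposition $\overline{h^{-1}(F)} = h^{-1}(F) \cup \bigl(\overline{h^{-1}(F)} \setminus h^{-1}(F)\bigr)$, using the key inclusion together with monotonicity of the outer measure, and invoking the hypothesis $\mathbb{P}^*(Z \in D_h) = 0$, yields
$$\mathbb{P}\bigl(Z \in \overline{h^{-1}(F)} \setminus h^{-1}(F)\bigr) \le \mathbb{P}^*(Z \in D_h) = 0,$$
so that $\mathbb{P}(Z \in \overline{h^{-1}(F)}) = \mathbb{P}(Z \in h^{-1}(F)) = \mathbb{P}(h(Z) \in F)$, which is the sought estimate.

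The main subtlety I expect is the measurability of $D_h$: in a general topological space $D_h$ need not be Borel, which is precisely why the hypothesis is phrased in terms of the outer measure $\mathbb{P}^*$. The argument above is careful to apply the genuine probability $\mathbb{P}$ only to the honestly Borel set $\overline{h^{-1}(F)} \setminus h^{-1}(F)$, and then to bound above by $\mathbb{P}^*(Z \in D_h)$ through monotonicity. The secondary technical point is that nets, not sequences, are indispensable for the closure argument, because no first-countability is assumed on $(X,\mathcal{O})$ or $(E,\mathcal{G})$; this is exactly the setting in which the present paper applies the theorem, since the spaces $(S,\mathcal{T}_S)$ and the order topologies $\mathcal{O}_<$, $\mathcal{O}_>$ are not metrizable.
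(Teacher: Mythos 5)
Your proof is correct and is essentially the standard closure argument (the inclusion $\overline{h^{-1}(F)}\setminus h^{-1}(F)\subseteq D_h$ plus the closed-set formulation of distributional convergence), which is exactly the proof the paper points to in G\"{a}nssler and Stute, p.~345, rather than spelling out itself. All the delicate points are handled properly: the net characterization of closure in a non-first-countable space, applying $\mathbb{P}$ only to the Borel set $\overline{h^{-1}(F)}\setminus h^{-1}(F)$ and bounding it by $\mathbb{P}^*(Z\in D_h)$, so nothing is missing.
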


\vspace{0.5cm}
Let $(Y,\mathcal{O}_Y)$ with $Y \subseteq X$ and $\mathcal{O}_Y:=Y \cap \mathcal{O}$ be a subspace of $(X,\mathcal{O})$. Notice that $\mathcal{B}(Y)=
\sigma(\mathcal{O}_Y)= Y \cap \mathcal{B}(X)$. So, a map $Z:\Omega \rightarrow X $ with range contained in $Y$ is
$\mathcal{A}-\mathcal{B}(Y)$ measurable (considered as a map into $Y$) if and only if it is $\mathcal{A}-\mathcal{B}(X)$ measurable.
Suppose $Z$ and $Z_\alpha$ for all $\alpha \in I$ are random variables with values in the subspace. Then $Z_\alpha \stackrel{\mathcal{D}}{\rightarrow} Z$ in $(Y,\mathcal{O}_Y)$ is equivalent to
$Z_\alpha \stackrel{\mathcal{D}}{\rightarrow} Z$ in $(X,\mathcal{O})$. Indeed, the natural injection $i:(Y,\mathcal{O}_Y) \rightarrow (X,\mathcal{O})$ given by $i(x)=x$ is continuous. Thus the \emph{Contiuous Mapping Theorem (CMT)} shows sufficiency.  The necessity follows from (\ref{open}) upon noticing that $\{Z_\alpha \in Y \cap O\}=\{Z_\alpha \in O\}$ and $\{Z \in Y \cap O\}=\{Z \in O\}$ for all $O \in \mathcal{O}$. We call this equivalence the \emph{Subspace-lemma}.

For further properties including the \emph{Portmanteau-Theorem} we refer to chapter 8.4 in G\"{a}nssler and Stute \cite{Stute}.\\

Recall the left- and right-order topologies $\mathcal{O}_<$ and $\mathcal{O}_>$, which are not metrizible. If a net $(x_\alpha)_{\alpha \in I}$ converges in $(\mathbb{R}, \mathcal{O}_<)$ and in $(\mathbb{R}, \mathcal{O}_>)$, then it converges in the natural topology $\mathcal{O}_n$, and vice versa.
The following example shows that there is a counterpart for distributional convergence.\\

\begin{example} \label{disordertop}
\begin{equation} \label{left}
 Z_\alpha \stackrel{\mathcal{D}}{\rightarrow} Z \; \text{ in } \; (\mathbb{R},\mathcal{O}_>) \quad \Longleftrightarrow \quad  \liminf_\alpha \mathbb{P}(Z_\alpha >x) \ge \mathbb{P}(Z >x) \quad \forall \; x \in \mathcal{R}.
\end{equation}
\begin{equation} \label{right}
 Z_\alpha \stackrel{\mathcal{D}}{\rightarrow} Z \; \text{ in } \; (\mathbb{R},\mathcal{O}_<) \quad \Longleftrightarrow \quad \liminf_\alpha \mathbb{P}(Z_\alpha <x) \ge \mathbb{P}(Z <x) \quad \forall \; x \in \mathcal{R}.
\end{equation}
\begin{equation} \label{leftright}
 Z_\alpha \stackrel{\mathcal{D}}{\rightarrow} Z \; \text{ in } \; (\mathbb{R},\mathcal{O}_>) \text{ and in } \; (\mathbb{R},\mathcal{O}_<) \quad \Longleftrightarrow \quad  \; Z_\alpha \stackrel{\mathcal{D}}{\rightarrow} Z \; \text{ in } \; (\mathbb{R},\mathcal{O}_n)
\end{equation}

\vspace{0.5cm}
Here, (\ref{left}) and (\ref{right}) are immediate consequences of the definitions. In (\ref{leftright}) the sufficiency of the right side follows from $\mathcal{O}_n \supseteq \mathcal{O}_<$ and $\mathcal{O}_n \supseteq \mathcal{O}_>$. To see necessity let $x \in \mathbb{R}$. Then we obtain:

\begin{eqnarray*}
 \mathbb{P}(Z<x) &\le& \liminf_\alpha \mathbb{P}(Z_\alpha <x) \quad \hspace{2cm} \text{ by } (\ref{right})\\
                 &\le& \limsup_\alpha \mathbb{P}(Z_\alpha <x) \\
                 &\le& \limsup_\alpha \mathbb{P}(Z_\alpha \le x) \le \mathbb{P}( Z \le x) \quad \text{ by } (\ref{left}) \text{ and complementation}.
\end{eqnarray*}
Thus, if the distribution function of $Z$ is continuous at $x$, i.e., $\mathbb{P}(Z<x)=\mathbb{P}(Z \le x)$ it follows that
$\lim_\mathbb{\alpha} \mathbb{P}(Z_\alpha \le x) = \mathbb{P}( Z \le x)$ as required.\\

Deduce from (\ref{left}): If $Z_\alpha \le Z_\alpha^* \; \mathbb{P}-$almost surely (a.s.) for all $\alpha \ge \alpha_0 \in I$ or if $Z \ge Z^*$ a.s., then  $Z_\alpha \stackrel{\mathcal{D}}{\rightarrow} Z \; \text{ in } \; (\mathbb{R},\mathcal{O}_>)$ entails  $Z_\alpha^* \stackrel{\mathcal{D}}{\rightarrow} Z^* \; \text{ in } \; (\mathbb{R},\mathcal{O}_>)$. (property 1)

Deduce from (\ref{right}): If $Z_\alpha \ge Z_\alpha^*$ (a.s.) for all $\alpha \ge \alpha_0 \in I$ or if $Z \le Z^*$ a.s., then  $Z_\alpha \stackrel{\mathcal{D}}{\rightarrow} Z \; \text{ in } \; (\mathbb{R},\mathcal{O}_<)$ entails  $Z_\alpha^* \stackrel{\mathcal{D}}{\rightarrow} Z^* \; \text{ in } \; (\mathbb{R},\mathcal{O}_<)$. (property 2)\\

In particular, this shows that the limit variables are not unique.
\end{example}

\vspace{0.5cm}
Our next result characterizes distributional convergence for random variables with values in the function space $(C,\mathcal{T})$.
Below the euclidian space $\mathbb{R}^k$ is endowed with the product-topology $\mathcal{O}_n^k$.\\

\begin{proposition} (\textbf{Functional limits}) \label{fidis} Let $Z$ and $Z_\alpha, \alpha \in I$, be convex stochastic processes. Then they are random variables in $(C,\mathcal{T})$ and the following statements (\ref{convinC}) and (\ref{convfidis}) are equivalent:
\begin{equation} \label{convinC}
 Z_\alpha \stackrel{\mathcal{D}}{\rightarrow} Z \; \text{ in } \; (C,\mathcal{T})
\end{equation}

\begin{equation} \label{convfidis}
 (Z_\alpha(t_1),\ldots,Z_\alpha(t_k)) \stackrel{\mathcal{D}}{\rightarrow} (Z(t_1),\ldots,Z(t_k)) \; \text{ in } (\mathbb{R}^k, \mathcal{O}_n^k)
\end{equation}
for every $k \in \mathbb{N}$ and for each collection of points $t_1,\ldots, t_k \in D$, where $D$ is a countable and dense subset of $\mathbb{R}$.
\end{proposition}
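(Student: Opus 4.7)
The plan is to derive both implications from the Continuous Mapping Theorem (CMT), using Lemma \ref{mb} to guarantee measurability and Lemma \ref{projmap} as a bridge from $(C,\mathcal{T})$ to the product space $(\mathbb{R}^\mathbb{N},\Pi)$, where the classical characterization of weak convergence by finite-dimensional marginals is available.

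By Lemma \ref{mb} every convex stochastic process is $\mathcal{A}$-$\mathcal{B}(C)$ measurable, so $Z$ and the $Z_\alpha$ are genuine random variables in $(C,\mathcal{T})$. For (\ref{convinC}) $\Rightarrow$ (\ref{convfidis}), fix $k \in \mathbb{N}$ and $t_1,\ldots,t_k \in D$. Since $\mathcal{T}$ is generated by the projections, the product map $\phi_k := (\pi_{t_1},\ldots,\pi_{t_k}):(C,\mathcal{T}) \to (\mathbb{R}^k,\mathcal{O}_n^k)$ is continuous, and applying the CMT to $\phi_k$ yields (\ref{convfidis}) at once.

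For the converse, set $Y_\alpha := H(Z_\alpha)$ and $Y := H(Z)$ with $H$ the injection from Lemma \ref{projmap}, so that $Y_\alpha,Y$ take values in $R = H(C) \subseteq \mathbb{R}^\mathbb{N}$ and are measurable into $(R,\mathcal{R})$ as well as into $(\mathbb{R}^\mathbb{N},\Pi)$, since the coordinates of $H$ are the $\mathcal{C}$-measurable projections $\pi_{t_i}$. The hypothesis (\ref{convfidis}) states precisely that every finite-dimensional marginal of $Y_\alpha$ converges in distribution to the corresponding marginal of $Y$. The main task, and the step I expect to require the most care, is to upgrade this to $Y_\alpha \stackrel{\mathcal{D}}{\rightarrow} Y$ in $(\mathbb{R}^\mathbb{N},\Pi)$. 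I would verify the defining inequality (\ref{open}) first on basic open cylinders, where it reduces to the Portmanteau theorem on $\mathbb{R}^k$ applied to the converging marginals, and then extend to an arbitrary open $O \in \Pi$ by writing $O$ as a countable union of such cylinders, passing to finite subunions (which are themselves cylinders) and letting the number of pieces grow via continuity of probability from below; the net-indexing poses no additional difficulty because the entire argument runs through liminf inequalities.

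Once $Y_\alpha \stackrel{\mathcal{D}}{\rightarrow} Y$ in $(\mathbb{R}^\mathbb{N},\Pi)$ is established, the Subspace-lemma delivers $Y_\alpha \stackrel{\mathcal{D}}{\rightarrow} Y$ in $(R,\mathcal{R})$, and a final application of the CMT to the continuous map $H^{-1}:(R,\mathcal{R}) \to (C,\mathcal{T})$ from Lemma \ref{projmap} produces $Z_\alpha = H^{-1}(Y_\alpha) \stackrel{\mathcal{D}}{\rightarrow} H^{-1}(Y) = Z$ in $(C,\mathcal{T})$, which is (\ref{convinC}).
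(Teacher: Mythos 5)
Your proof is correct and follows the same skeleton as the paper's: measurability from Lemma \ref{mb}, the forward implication via continuity of the finite product of projections plus the CMT, and the converse via $H$, the Subspace-lemma, and the CMT applied to $H^{-1}$ from Lemma \ref{projmap}. The one genuine difference lies in the step you correctly single out as the delicate one, namely upgrading convergence of the finite-dimensional marginals to $H(Z_\alpha) \stackrel{\mathcal{D}}{\rightarrow} H(Z)$ in $(\mathbb{R}^\mathbb{N},\Pi)$: the paper disposes of this by citing Example 2.6 in Billingsley, whereas you prove it directly by verifying the liminf inequality (\ref{open}) on finite unions of basic open cylinders (each of which is the preimage of an open subset of a finite-dimensional coordinate space, so the hypothesis (\ref{convfidis}), already stated in open-set Portmanteau form, applies) and then exhausting an arbitrary open set by such finite unions using monotone continuity of $\mathbb{P}(Z \in \cdot)$, with second countability of $\Pi$ supplying the countable cover. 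This self-contained argument is sound and actually buys something: Billingsley's result is formulated for sequences of probability measures, while the proposition here concerns nets, and your liminf-based derivation makes explicit that the lifting step goes through for nets without any appeal to metrizability-specific sequential arguments.
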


\begin{proof} The first assertion holds by Lemma \ref{mb}. Assume (\ref{convinC}) holds. By definition of $\mathcal{T}$ every projection $\pi_t:(C,\mathcal{T}) \rightarrow (\mathbb{R}, \mathcal{O}_n)$ is continuous, whence the product map $\pi:=(\pi_{t_1},\ldots,\pi_{t_k}):(C,\mathcal{T}) \rightarrow (\mathbb{R}^k, \mathcal{O}_n^k)$ is continuous as well. Since $(Z_\alpha(t_1),\ldots,Z_\alpha(t_k))= \pi(Z_\alpha)$ and $(Z(t_1),\ldots,Z(t_k))= \pi(Z)$,
an application of the CMT yields (\ref{convfidis}).

For the converse first note that by countability we have that $D=\{t_1,t_2,\ldots\}$. Recall the projection map $H$ given in Lemma \ref{projmap}.
It follows from Example 2.6 in Billingsley \cite{Bill2} that (\ref{convfidis}) entails that $H(Z_\alpha) \stackrel{\mathcal{D}}{\rightarrow} H(Z)$ in
$(\mathbb{R}^\mathbb{N}, \Pi)$. Now, the Subspace-lemma says that $H(Z_\alpha) \stackrel{\mathcal{D}}{\rightarrow} H(Z)$ in
$(R, \mathcal{R})$. By Lemma \ref{projmap} the inverse $H^{-1}:(R, \mathcal{R}) \rightarrow (C,\mathcal{T})$ is continuous, so that another application of the CMT yields (\ref{convinC}).
\end{proof}

The second statement (\ref{convfidis}) is known as \emph{convergence of the finite dimensional distributions (on D)} and denoted by
$Z_\alpha \stackrel{fd}{\longrightarrow}_D Z$ (in short: \emph{convergence of the fidis}).\\

We are now in the position to formulate several so-called \emph{Argmin-Theorems} for convex stochastic processes.\\

\begin{theorem} \label{notunique} Let $D$ be countable and dense in $\mathbb{R}$. Consider convex stochastic processes $Z$ and
$Z_\alpha, \alpha \in I$, in $U$. Suppose that $Z_\alpha \stackrel{fd}{\longrightarrow}_D Z$.
Then the following statements hold:
\begin{itemize}
\item[(1)]
If $U=S$, then $\sigma(Z_\alpha) \stackrel{\mathcal{D}}{\longrightarrow} \sigma(Z)$ in $(\mathbb{R},\mathcal{O}_>)$.\\
If in addition $Z$ is a process in $S^*$ with $Z \in S_u$ a.s., then $\sigma(Z_\alpha) \stackrel{\mathcal{D}}{\longrightarrow} \sigma(Z)$ in $(\mathbb{R},\mathcal{O}_n)$.
\item[(2)]
If $U=S'$, then $\tau(Z_\alpha) \stackrel{\mathcal{D}}{\longrightarrow} \tau(Z)$ in $(\mathbb{R},\mathcal{O}_<)$.\\
If in addition $Z$ is a process in $S^*$ with $Z \in S_u$ a.s., then $\tau(Z_\alpha) \stackrel{\mathcal{D}}{\longrightarrow} \tau(Z)$ in $(\mathbb{R},\mathcal{O}_n)$.
\item[(3)]
If $U=S^*$, then $\xi(Z_\alpha) \stackrel{\mathcal{D}}{\longrightarrow} \sigma(Z)$ in $(\mathbb{R},\mathcal{O}_>)$ and $\xi(Z_\alpha) \stackrel{\mathcal{D}}{\longrightarrow} \tau(Z)$ in $(\mathbb{R},\mathcal{O}_<)$.\\
If in addition $\sigma(Z) \stackrel{\mathcal{D}}{=} \tau(Z)$, then $\xi(Z_\alpha) \stackrel{\mathcal{D}}{\longrightarrow} \sigma(Z)$ in $(\mathbb{R},\mathcal{O}_n)$.
\end{itemize}
\end{theorem}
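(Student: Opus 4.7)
The plan is to funnel the assumption through Proposition~\ref{fidis}, the Subspace-lemma, and the Continuous Mapping Theorem (CMT), reading off the continuity statements already assembled in this section and the previous one. I would start by using Proposition~\ref{fidis} to translate $Z_\alpha \stackrel{fd}{\longrightarrow}_D Z$ into $Z_\alpha \stackrel{\mathcal{D}}{\longrightarrow} Z$ in $(C,\mathcal{T})$, and then invoke the Subspace-lemma to upgrade this to $Z_\alpha \stackrel{\mathcal{D}}{\longrightarrow} Z$ in $(U,\mathcal{T}_U)$, which is the common starting point of all three parts.

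For the main statement of (1), Corollary~\ref{ordertop}(1) gives that $\sigma:(S,\mathcal{T}_S)\to(\mathbb{R},\mathcal{O}_>)$ is continuous everywhere, with Borel measurability supplied by Proposition~\ref{semi}(i); the CMT with empty discontinuity set then yields $\sigma(Z_\alpha)\stackrel{\mathcal{D}}{\longrightarrow}\sigma(Z)$ in $(\mathbb{R},\mathcal{O}_>)$. Part~(2) is completely symmetric, via Corollary~\ref{ordertop}(2).

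For the natural-topology refinements in (1) and (2), I would apply the CMT to $\sigma:(S,\mathcal{T}_S)\to(\mathbb{R},\mathcal{O}_n)$ and argue that its set $D_\sigma$ of discontinuities is contained in $S\setminus S_u$. Lower semi-continuity of $\sigma$ is already granted by Corollary~\ref{netconv}(1); for the matching upper bound at any $f\in S_u$, write $x_0:=\sigma(f)=\tau(f)$ and suppose, for contradiction, that $\sigma(f_\alpha)\ge x_0+\varepsilon$ holds cofinally for some $\varepsilon>0$ along a net $f_\alpha\to f$ pointwise. Then $f_\alpha$ is non-increasing on $(-\infty,\sigma(f_\alpha)]\supseteq[x_0,x_0+\varepsilon]$ (a fact proved inside the argument for Theorem~\ref{location}), so $f_\alpha(x_0+\varepsilon)\le f_\alpha(x_0)$, and passing to the pointwise limit yields $f(x_0+\varepsilon)\le f(x_0)$, contradicting the uniqueness of the minimizer of $f$. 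Since $Z\in S_u$ almost surely by hypothesis and $S_u\in \mathcal{C}_{S^*}$ by Lemma~\ref{Sumb}, the event $\{Z\in D_\sigma\}$ is contained in the null event $\{Z\notin S_u\}$, so $\mathbb{P}^*(Z\in D_\sigma)=0$, and the CMT delivers $\sigma(Z_\alpha)\stackrel{\mathcal{D}}{\longrightarrow}\sigma(Z)$ in $(\mathbb{R},\mathcal{O}_n)$; the $\tau$-statement is treated analogously.

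Finally, for (3) with $U=S^*$, the pathwise inequalities $\sigma(Z_\alpha)\le\xi(Z_\alpha)\le\tau(Z_\alpha)$ combined with properties~1 and 2 of Example~\ref{disordertop} (applied to the $\mathcal{O}_>$-limit $\sigma(Z_\alpha)\to\sigma(Z)$ and the $\mathcal{O}_<$-limit $\tau(Z_\alpha)\to\tau(Z)$ supplied by parts (1) and (2), using $Z=Z^*$ in each case) yield $\xi(Z_\alpha)\stackrel{\mathcal{D}}{\longrightarrow}\sigma(Z)$ in $(\mathbb{R},\mathcal{O}_>)$ and $\xi(Z_\alpha)\stackrel{\mathcal{D}}{\longrightarrow}\tau(Z)$ in $(\mathbb{R},\mathcal{O}_<)$. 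Under the additional hypothesis $\sigma(Z)\stackrel{\mathcal{D}}{=}\tau(Z)$ the two one-sided limits identify in distribution, so $\xi(Z_\alpha)$ converges in distribution to $\sigma(Z)$ in both order topologies, and the equivalence (\ref{leftright}) of Example~\ref{disordertop} upgrades this to convergence in $(\mathbb{R},\mathcal{O}_n)$. I expect the discontinuity-set computation of the previous paragraph to be the chief obstacle: convexity of the $f_\alpha$ must be exploited carefully to prevent $\sigma(f_\alpha)$ from drifting away from $\sigma(f)$ even when the $f_\alpha$ may lack a finite largest minimizer.
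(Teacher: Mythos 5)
Your proposal is correct and follows essentially the same route as the paper: Proposition~\ref{fidis} plus the Subspace-lemma to get $Z_\alpha \stackrel{\mathcal{D}}{\rightarrow} Z$ in $(S,\mathcal{T}_S)$, the CMT with Corollary~\ref{ordertop} for the order-topology statements, the inclusion $D_\sigma \subseteq S\setminus S_u$ together with Lemma~\ref{Sumb} and measurability from Proposition~\ref{semi} for the $\mathcal{O}_n$ refinements, and the sandwich $\sigma(Z_\alpha)\le\xi(Z_\alpha)\le\tau(Z_\alpha)$ with properties 1 and 2 and (\ref{leftright}) of Example~\ref{disordertop} for part (3). The only difference is cosmetic: you re-derive the continuity of $\sigma$ at points of $S_u$ by a direct contradiction argument (which is sound, and in fact makes explicit what the paper obtains by citing Corollary~\ref{cont} and the surrounding discussion).
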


\begin{proof} First notice that by Corollary \ref{mbargmin} all involved maps are real random variables.
By Proposition \ref{fidis} $Z_\alpha \stackrel{\mathcal{D}}{\longrightarrow} Z$ in $(C,\mathcal{T})$, whence by the
Subspace-lemma
\begin{equation} \label{convinS}
Z_\alpha \stackrel{\mathcal{D}}{\longrightarrow} Z \text{ in } (S,\mathcal{T}_S).
\end{equation}
From Corollary \ref{ordertop} we know
that $\sigma:(S,\mathcal{T}_S) \rightarrow (\mathbb{R},\mathcal{O}_>)$ is continuous and consequently the CMT yields the first convergence in (1).

By Lemma \ref{Sumb} $S_u \in \mathcal{C}_{S^*}$. According to Lemma \ref{mb} $Z$ is $\mathcal{A}-\mathcal{C}_{S^*}$ measurable
upon noticing that $\mathcal{B}(S^*)= S^* \cap \mathcal{B}(C) = S^* \cap \mathcal{C}=\mathcal{C}_{S^*}$ by Proposition \ref{char}.
Thus $\{Z \in S_u\} \in \mathcal{A}$, whence by Corollary \ref{cont} it follows that
$0 \le \mathbb{P}^*(Z \in D_\sigma) \le \mathbb{P}^*(Z \notin S_u) = \mathbb{P}(Z \notin S_u)=0$.
Finally, by Propositions \ref{semi} and \ref{char} $\sigma:(S,\mathcal{T}_S) \rightarrow (\mathbb{R},\mathcal{O}_n)$ is $\mathcal{B}(S)-\mathcal{B}(\mathbb{R})$ measurable. Consequently by (\ref{convinS}) another application of the CMT gives the second convergence in (1).

The second part (2) follows exactly the same way. Regarding part (3) it should be noted that $S^* \subseteq S$ and $S^* \subseteq S'$. Therefore $\sigma(Z_\alpha) \stackrel{\mathcal{D}}{\longrightarrow} \sigma(Z)$ in $(\mathbb{R},\mathcal{O}_>)$ by (1) and
$\tau(Z_\alpha) \stackrel{\mathcal{D}}{\longrightarrow} \tau(Z)$ in $(\mathbb{R},\mathcal{O}_<)$ by (2). This shows the first assertions in
part (3), because $\sigma(Z_\alpha) \le \xi(Z_\alpha) \le \tau(Z_\alpha)$ for all $\alpha \in I$ and so we can use property 1
and property 2 from Example \ref{disordertop}. Finally, the second assertion in (3) follows from (\ref{leftright}) in Example \ref{disordertop}.
\end{proof}

\vspace{0.5cm}
Our Argmin-Theorems (1)-(3) involve two types of uniqueness assumptions. In (1) and (2) the requirement $Z \in S_u$ a.s. is the same
as $\sigma(Z)=\tau(Z)$ a.s., which in turn implies $\sigma(Z) \stackrel{\mathcal{D}}{=} \tau(Z)$ as in part (3). However, if
$\sigma(Z)$ and $\tau(Z)$ are $\mathbb{P}-$integrable, then the reverse implication holds. Indeed, in this case by linearity $\mathbb{E}[\tau(Z)-\sigma(Z)]=0$. But since the integrand is non-negative, it must be equal to zero a.s.

A comparison shows that on the one hand in (3) the uniqueness condition is weaker than in (1) or in (2), on the other hand in (3) the demands on the stochastic processes are more stringent.\\

In the following we give some interesting equivalent characterizations for almost sure uniqueness of the minimizing point.\\

\begin{proposition} \label{unique} Suppose $Z$ is a convex stochastic process in $S^*$.
Then the following three statements are equivalent:
\begin{itemize}
\item[(1)]
$\sigma(Z)=\tau(Z)$ a.s.

\item[(2)]
$x \notin A(Z)$  a.s. for Lebesgue-almost every $x \in \mathbb{R}$.

\item[(3)]
$0 \notin [D^-Z(x),D^+Z(x)]$ a.s. for Lebesgue-almost every $x \in \mathbb{R}$.
\end{itemize}
\end{proposition}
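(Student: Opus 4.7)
The plan is to show the chain (2) $\Leftrightarrow$ (3) and (1) $\Leftrightarrow$ (2). The equivalence (2) $\Leftrightarrow$ (3) is essentially tautological: by the characterization (\ref{minset}) applied pointwise in $\omega$, one has $x \in A(Z(\omega))$ if and only if $D^-Z(\omega)(x) \le 0 \le D^+Z(\omega)(x)$, i.e.\ $0 \in [D^-Z(x), D^+Z(x)]$. Taking the complement and then the qualifier ``a.s.\ for Lebesgue-almost every $x$'' gives the equivalence immediately.

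For (1) $\Leftrightarrow$ (2) I would argue via Fubini/Tonelli. Since $Z$ is a process in $S^* = S \cap S'$, each trajectory satisfies $A(Z(\omega)) = [\sigma(Z(\omega)), \tau(Z(\omega))]$, a compact interval, so its Lebesgue measure equals $\tau(Z(\omega)) - \sigma(Z(\omega)) \ge 0$. Hence (1) is equivalent to saying that this Lebesgue measure vanishes $\mathbb{P}$-almost surely, which in turn is equivalent to
\begin{equation*}
 \mathbb{E}\bigl[\tau(Z) - \sigma(Z)\bigr] \;=\; \mathbb{E}\!\left[\int_{\mathbb{R}} \mathbf{1}_{A(Z)}(x)\,dx\right] \;=\; 0.
\end{equation*}
(If the expectation on the left is not finite, one can either truncate or note directly that the nonnegative random variable $\tau(Z)-\sigma(Z)$ vanishes a.s.\ iff $\int_{\mathbb{R}} \mathbf{1}_{\{\sigma(Z) \le x \le \tau(Z)\}}\,dx = 0$ a.s.) By Tonelli's theorem applied to the nonnegative integrand, the middle expression equals
\begin{equation*}
 \int_{\mathbb{R}} \mathbb{P}(x \in A(Z))\,dx,
\end{equation*}
and this integral vanishes precisely when $\mathbb{P}(x \in A(Z)) = 0$ for Lebesgue-a.e.\ $x$, which is (2).

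The only technical point to verify is measurability so that Tonelli applies. Corollary \ref{mbargmin} ensures that $\sigma(Z)$ and $\tau(Z)$ are real random variables; consequently the set
\begin{equation*}
 \{(\omega,x) \in \Omega \times \mathbb{R} : x \in A(Z(\omega))\} = \{(\omega,x) : \sigma(Z(\omega)) \le x \le \tau(Z(\omega))\}
\end{equation*}
lies in $\mathcal{A} \otimes \mathcal{B}(\mathbb{R})$, so its indicator is jointly measurable. This is the main (and only) obstacle, and it is handled by invoking Corollary \ref{mbargmin}; after that the Fubini interchange is routine.
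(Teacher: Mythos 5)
Your proposal is correct and follows essentially the same route as the paper: both rest on the identity $\mathbb{E}[\tau(Z)-\sigma(Z)]=\int_{\mathbb{R}}\mathbb{P}(x\in A(Z))\,\lambda(dx)$ obtained via Tonelli/Fubini applied to the non-negative indicator of the minimum set, together with the fact that a non-negative integrand has vanishing integral iff it vanishes almost everywhere, while (2) $\Leftrightarrow$ (3) is read off from (\ref{minset}). The only cosmetic difference is that you justify joint measurability via Corollary \ref{mbargmin} and treat (2) $\Leftrightarrow$ (3) separately, whereas the paper folds the passage between $A(Z)$ and the derivative condition into its chain of equalities using Theorem \ref{location}.
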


\begin{proof} We write briefly $\sigma=\sigma(Z)$ and $\tau=\tau(Z)$. Moreover, let $\lambda$  denote the Lebegue-measure on $\mathbb{R}$.  Note that $\tau-\sigma \ge 0$, whence by Corollary \ref{mbargmin} the expectation exists and
is equal to:
\begin{eqnarray*}
\mathbb{E}[\tau-\sigma]&=&\mathbb{E}[\lambda([\sigma,\tau])] = \int_\Omega \int_\mathbb{R}  1_{\{\sigma \le x, \tau \ge x\}} \lambda(dx)
\mathbb{P}(d\omega)\\
&=& \int_\Omega \int_\mathbb{R}  1_{\{D^-Z(x) \le 0 \le D^+Z(x)\}} \lambda(dx)\mathbb{P}(d\omega) \quad \text{by Theorem \ref{location}}\\
&=&\int_\mathbb{R} \mathbb{P}(D^-Z(x) \le 0 \le D^+Z(x)) \lambda(dx) \hspace{0,75cm} \text{by Fubini}\\
&=&\int_\mathbb{R} \mathbb{P}(x \in A(Z)) \lambda(dx) \hspace{3cm} \text{by } (\ref{minset})
\end{eqnarray*}
Observe that all occurring integrands are non-negative. Thus the equivalence can be deduced from a well-known result from integration theory, confer, e.g., Lemma 1.15, p. 304 in Dshalalow \cite{Dshal}.
\end{proof}

\vspace{0.5cm}
If $Z$ in the above proposition is in addition differentiable, then $\sigma(Z)=\tau(Z)$ a.s. if and only if $Z'(x) \neq 0$ a.s. for $\lambda-$ every $x \in \mathbb{R}$.\\

Next, we establish an Argmin-Theorem for almost sure convergence. Just as in Theorem \ref{notunique}, there are also convergence statements here for the non-unique case. This extends results known so far, confer Theorem 7.77 in Liese and Mieschke \cite{Liese}, which in turn rely on the unpublished preprint of Hjort and Pollard \cite{Hjort}.\\


\begin{theorem} \label{as} Let $Z$ and $Z_\alpha, \alpha \in I$, be convex stochastic processes in $U$ defined on a complete
probability space $(\Omega,\mathcal{A},\mathbb{P})$.
Assume that $Z_\alpha(t) \rightarrow Z(t)$ a.s. for every $t \in D$ with $D$ a countable and dense subset of $\mathbb{R}$.
\begin{itemize}
\item[(1)]
If $U=S$, then $\liminf_{\alpha} \sigma(Z_\alpha) \ge \sigma(Z)$ a.s.
\item[(2)]
If $U=S'$, then $\limsup_{\alpha} \tau(Z_\alpha) \le \tau(Z)$ a.s.
\item[(3)]
If $U=S^*$ and $Z \in S_u$ a.s., then $\lim_{\alpha} \sigma(Z_\alpha)= \sigma(Z)=\tau(Z)$ a.s. and $\lim_{\alpha} \tau(Z_\alpha)= \tau(Z)=\sigma (Z)$ a.s.
\item[(4)]
If $U=S^*$, then $\liminf_{\alpha} \xi(Z_\alpha) \ge \sigma(Z)$ a.s. and $\limsup_{\alpha} \xi(Z_\alpha) \le \tau(Z)$ a.s.
If in addition $Z \in S_u$ a.s., then $\lim_{\alpha} \xi(Z_\alpha) = \sigma(Z)= \tau(Z)$ a.s.
\end{itemize}
\end{theorem}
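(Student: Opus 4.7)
The plan is to reduce each of the four assertions to its deterministic counterpart in Corollary \ref{dense}, pointwise in $\omega$.

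First, enumerate $D = \{t_i : i \in \mathbb{N}\}$. For each $i$ the hypothesis furnishes a null set $N_i \in \mathcal{A}$ off which $Z_\alpha(\omega, t_i) \to Z(\omega, t_i)$, so by countable subadditivity $N := \bigcup_{i \in \mathbb{N}} N_i$ is again a measurable null set. Thus for every $\omega \in \Omega \setminus N$ the net of trajectories $(Z_\alpha(\omega))_{\alpha \in I} \subseteq U$ converges pointwise on $D$ to $Z(\omega) \in U$, which is precisely the deterministic hypothesis of Corollary \ref{dense}.

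I then apply that corollary case by case. For $U = S$ its first part yields $\liminf_\alpha \sigma(Z_\alpha(\omega)) \ge \sigma(Z(\omega))$ on $\Omega \setminus N$, proving (1); the case $U = S'$ is symmetric and yields (2). For (3), intersect $\Omega \setminus N$ with the measurable set $\{Z \in S_u\} \in \mathcal{A}$ (measurable by Lemma \ref{Sumb} together with Proposition \ref{char} and Lemma \ref{mb}); the third part of Corollary \ref{dense} then gives $\lim_\alpha \sigma(Z_\alpha(\omega)) = \lim_\alpha \tau(Z_\alpha(\omega)) = \sigma(Z(\omega)) = \tau(Z(\omega))$. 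For (4), the pointwise sandwich $\sigma(Z_\alpha(\omega)) \le \xi(Z_\alpha(\omega)) \le \tau(Z_\alpha(\omega))$, valid since $\xi$ selects from $A$, combines with parts (1) and (2) (applied via $S^* \subseteq S$ and $S^* \subseteq S'$) to yield the two liminf/limsup bounds on $\xi(Z_\alpha)$; the uniqueness addendum then follows by a further sandwich once $\sigma(Z) = \tau(Z)$ a.s.

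The main obstacle worth flagging is the measurability of $\liminf_\alpha$ and $\limsup_\alpha$ of a net of real random variables, which can genuinely fail when the directed index set $I$ is uncountable. For each fixed $\alpha$ the maps $\sigma(Z_\alpha)$, $\tau(Z_\alpha)$ and $\xi(Z_\alpha)$ are themselves real random variables by Corollary \ref{mbargmin}, but this does not by itself guarantee $\mathcal{A}$-measurability of the net-liminf. This is precisely why the hypothesis includes completeness of $(\Omega,\mathcal{A},\mathbb{P})$: in every case the exceptional set on which the claimed inequality fails sits inside $N$, or inside $N \cup \{Z \notin S_u\}$, both of which are measurable and negligible, so completeness turns the exceptional set itself into a measurable null set. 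Had $I$ been countable, the completeness assumption would be superfluous.
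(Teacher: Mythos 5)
Your proposal is correct and follows essentially the same route as the paper's own proof: reduce to the deterministic Corollary \ref{dense} on the countable intersection of the convergence sets, invoke completeness to make the (possibly non-measurable, since $I$ may be uncountable) exceptional sets into measurable null sets, and handle (4) by sandwiching $\xi$ between $\sigma$ and $\tau$. Your closing observation about countable index sets also matches Remark \ref{drop} in the paper.
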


\begin{proof} Put $\Omega_t:=\{Z_\alpha(t) \rightarrow Z(t)\}, t \in D$. Then $\Omega_t \in \mathcal{A}$ and
$\mathbb{P}(\Omega_t)=1$ by assumption and by completeness. Thus
$\Omega_0 := \bigcap_{t \in D} \Omega_t \in \mathcal{A}$ and $\mathbb{P}(\Omega_0)=1$, because $D$ is countable.
Corollary \ref{dense}  yields that $\Omega_0 \subseteq \{\liminf_{\alpha} \sigma(Z_\alpha) \ge \sigma(Z)\} =:\Omega_1$.
It follows from completeness that $\Omega_1 \in \mathcal{A}$, and by monotonicity of $\mathbb{P}$ we arrive at $\mathbb{P}(\Omega_1)=1$, which shows (1). In the same way one obtains (2). Furthermore, Corollary \ref{dense} ensures that $\Omega_0 \cap \{Z \in S_u\}$ is a subset of $\{\lim_{\alpha} \sigma(Z_\alpha)= \sigma(Z)\}$ and of $\{\lim_{\alpha} \tau(Z_\alpha)= \tau(Z)\}$.
Again by completeness this gives (3). Since $S^* \subseteq S$ and $S^* \subseteq S'$ as well as $\sigma(Z_\alpha) \le \xi(Z_\alpha) \le \tau(Z_\alpha)$ the first part of (4) follows from (1) and (2). Finally, the second part of (4) follows from (3) in combination with the sandwich theorem.
\end{proof}

\begin{remark} \label{drop}
If $(Z_\alpha)_{\alpha \in \mathbb{N}}$ in Theorem \ref{as} in fact is a sequence, then we can drop the completeness
assumption about the underlying probability space $(\Omega,\mathcal{A},\mathbb{P})$. The reason for this is Corollary \ref{mbargmin}, which
guarantees that the sets $\Omega_t, \Omega_0$ and $\Omega_1$ are elements of $\mathcal{A}$.
\end{remark}

\vspace{0.5cm}
The Argmin-Theorem for convergence in probability takes the form as described below.
The reader recognises that it is only formulated for sequences and not more generally for nets. This is because we carry out the proof via the subsequence criterion. To the best of our knowledge, there is no counterpart for nets.\\

\begin{theorem} \label{convinprobab} Let $Z_n, n \in \mathbb{N}$, be convex stochastic processes in $U$ and let $Z \in S^*$ have an almost surely unique minimizer.
Suppose $Z_n(t) \stackrel{\mathbb{P}}{\rightarrow} Z(t)$ for each $t \in D$, where $D$ is a countable and dense subset of $\mathbb{R}$.
\begin{itemize}
\item[(1)]
If $U=S$, then $\sigma(Z_n) \stackrel{\mathbb{P}}{\rightarrow} \sigma(Z)\stackrel{a.s.}{=}\tau(Z)$.
\item[(2)]
If $U=S'$, then $\tau(Z_n) \stackrel{\mathbb{P}}{\rightarrow} \tau(Z)\stackrel{a.s.}{=}\sigma(Z)$.
\item[(3)]
If $U=S^*$, then $\xi(Z_n) \stackrel{\mathbb{P}}{\rightarrow} \sigma(Z)\stackrel{a.s.}{=}\tau(Z)$
\end{itemize}
\end{theorem}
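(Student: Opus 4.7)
The approach is the subsequence criterion for convergence in probability: $X_n \stackrel{\mathbb{P}}{\rightarrow} X$ if and only if every subsequence has a further subsequence converging to $X$ almost surely. This reduces everything to the almost sure Argmin theorem (Theorem \ref{as}), and at the same time explains the restriction to sequences in the statement, since the criterion has no direct analogue for nets.

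I would fix an arbitrary subsequence $(Z_{n_k})$ and enumerate the countable dense set as $D = \{t_1, t_2, \ldots\}$. Since $Z_{n_k}(t_i) \stackrel{\mathbb{P}}{\rightarrow} Z(t_i)$ for every $i$, a standard diagonal extraction produces a subsequence $(Z_{n_{k_l}})$ and a null set $N$ such that $Z_{n_{k_l}}(\omega, t_i) \rightarrow Z(\omega, t_i)$ for every $\omega \notin N$ and every $i \in \mathbb{N}$. On $\Omega^* := (\Omega \setminus N) \cap \{Z \in S_u\}$, which has full probability, each trajectory $Z_{n_{k_l}}(\omega, \cdot)$ converges to $Z(\omega,\cdot) \in S_u$ pointwise on $D$, and by Theorem \ref{uniformnet} then on all of $\mathbb{R}$. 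For part (3), where $U = S^*$, Theorem \ref{as}(3) applies verbatim to the sequence $(Z_{n_{k_l}})$ (Remark \ref{drop} lets us drop completeness for sequences), and the sandwich $\sigma(Z_{n_{k_l}}) \le \xi(Z_{n_{k_l}}) \le \tau(Z_{n_{k_l}})$ then yields $\xi(Z_{n_{k_l}}) \rightarrow \sigma(Z) = \tau(Z)$ almost surely; the subsequence criterion delivers $\xi(Z_n) \stackrel{\mathbb{P}}{\rightarrow} \sigma(Z)$. For parts (1) and (2), Theorem \ref{as}(1) and (2) only supply the one-sided bounds $\liminf \sigma(Z_{n_{k_l}}) \ge \sigma(Z)$ and $\limsup \tau(Z_{n_{k_l}}) \le \tau(Z)$ almost surely; the matching bounds follow, just as in the proof of Theorem \ref{notunique}(1), from the fact that $\sigma:(S, \mathcal{T}_S) \rightarrow (\mathbb{R}, \mathcal{O}_n)$ is continuous at every $f \in S_u$, and symmetrically for $\tau$.

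The main obstacle is precisely this continuity upgrade. Corollary \ref{cont} only states continuity of $\sigma$ on the \emph{subspace} $(S_u, \mathcal{T}_{S_u})$, whereas we need continuity at $f \in S_u$ with respect to convergent nets in the ambient space $(S, \mathcal{T}_S)$. This upgrade rests on the strict monotonicity of an element of $S_u$ on either side of its unique minimizer and on property (\ref{strict}) for an element of $S$; together they rule out $\limsup \sigma(f_n) > \sigma(f)$ for pointwise convergent $(f_n) \subseteq S$ and $f \in S_u$, via comparison of $f_n$-values at two test points just above the unique minimizer and passage to the limit, which would contradict the strict increase of $f$ to the right of its minimizer.
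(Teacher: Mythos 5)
Your proposal is correct and follows the same skeleton as the paper's proof: the subsequence criterion, a diagonal extraction over the countable dense set $D$ to pass to a subsequence converging pointwise on $D$ almost surely, and then a reduction to the almost-sure Argmin theorem (Theorem \ref{as}), with Remark \ref{drop} disposing of the completeness issue for sequences. The one place where you genuinely diverge is in parts (1) and (2), and there your version is the more careful one. The paper's proof simply asserts that the extracted sequence ``fulfils the requirements of Theorem \ref{as}, which yields that $\sigma(Z_{n_{ii}})\rightarrow\sigma(Z)$ a.s.''; but for $U=S$ only part (1) of Theorem \ref{as} is literally applicable, and it delivers just $\liminf_i\sigma(Z_{n_{ii}})\ge\sigma(Z)$ a.s., while the two-sided statement in Theorem \ref{as}(3) (equivalently Corollary \ref{netconv}(3)) requires the whole family to lie in $S^*=S\cap S'$, since its proof passes through $\sigma(f_\alpha)\le\tau(f_\alpha)$. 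You correctly identify this gap and close it with the continuity upgrade: for $f\in S_u$ and a net $(f_\alpha)\subseteq S$ converging pointwise (hence, by Theorem \ref{uniformnet}, on all of $\mathbb{R}$), each $f_\alpha$ is non-increasing on $(-\infty,\sigma(f_\alpha)]$, so $\sigma(f_\alpha)>m+\epsilon$ along a subnet would force $f_{\alpha'}(m)\ge f_{\alpha'}(m+\epsilon)$ and in the limit $f(m)\ge f(m+\epsilon)$, contradicting uniqueness of the minimizer $m$; together with the $\liminf$ bound this gives $\sigma(f_\alpha)\rightarrow\sigma(f)$, and symmetrically for $\tau$. So your argument is a complete proof of (1) and (2) under the stated hypotheses, whereas the paper's text either tacitly assumes this continuity-at-$S_u$ fact or implicitly strengthens $U$ to $S^*$; part (3) is handled identically in both via Theorem \ref{as}(4) and the sandwich $\sigma(Z_n)\le\xi(Z_n)\le\tau(Z_n)$.
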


\begin{proof} We will use the subsequence criterion, confer, e.g., Lemma 5.2 in Kallenberg \cite{Kall1}.
So, let $(n_{0k})_{k \in \mathbb{N}}$ be a subsequence of $\mathbb{N}$. As a countable set $D$ has the form
$D=\{t_1,t_2,\ldots\}$. From $Z_n(t_1) \stackrel{\mathbb{P}}{\rightarrow} Z(t_1)$ it follows with the subsequence criterion that there exists
a subsequence $(n_{1i})_{ i \in \mathbb{N}}$ of  $(n_{0k})_{k \in \mathbb{N}}$ such that on a set, $\Omega_1$ say, with probability one
we have that $Z_{n_{1i}}(t_1) \rightarrow Z(t_1), i \rightarrow \infty$. Another application of the subsequence criterion ensures that there
exists a subsequence $(n_{2i})_{i \in \mathbb{N}}$ of $(n_{1i})_{ i \in \mathbb{N}}$ such $Z_{n_{2i}}(t_2) \rightarrow Z(t_2), i \rightarrow \infty$,
on a set, $\Omega_2$ say, which has probability one. Continuing in this way we find for every $j \ge 1$ a subsequence $(n_{ji})_{i \in \mathbb{N}}$ of $(n_{j-1,i})_{i \in \mathbb{N}}$ and a set $\Omega_j$ with $\mathbb{P}(\Omega_j)=1$ such that $Z_{n_{ji}}(t_j) \rightarrow Z(t_j), i \rightarrow \infty$, on $\Omega_j$. Set $\Omega_0:=\bigcap_{j \ge 1} \Omega_j$. Observe that $\mathbb{P}(\Omega_0)=1$. The ''diagonal'' $(n_{ii})_{i \in \mathbb{N}}$ is a subsequence
of the given sequence $(n_{0k})_{k \in \mathbb{N}}$. Consider an arbitrary index $j \ge 1$. Apart from the first $j-1$ terms the sequence $(n_{ii})_{i \in \mathbb{N}}$ is a subsequence of $(n_{ji})_{i \in \mathbb{N}}$. But on $\Omega_0$, along that sequence we have convergence at point $t_j$, whence in particular $Z_{n_{ii}}(t_j) \rightarrow Z(t_j), i \rightarrow \infty$.
Thus we arrive at $Z_{n_{ii}}(t) \rightarrow Z(t), i \rightarrow \infty$, for all $t \in D$ a.s. Especially, the sequence $(Z_{n_{ii}})_{i \in \mathbb{N}}$ fulfils the requirements of Theorem \ref{as}, which yields that $\sigma(Z_{n_{ii}}) \rightarrow \sigma(Z)$ a.s. or that $\tau(Z_{n_{ii}}) \rightarrow \tau(Z)$ a.s., respectively, and another application of the subsequence criterion gives (1) and (2). The assertion (3) follows
from Theorem \ref{as} part (4).
\end{proof}

\vspace{0.5cm}
\textbf{Concluding remarks}

Argmin theorems for convex processes (and distributional convergence) are known since Davis, Knight and Liu (1992) \cite{Davis}, Hjort and Pollard (1993) \cite{Hjort}, Geyer (1996) \cite{Geyer} or Knight (2002) \cite{Knight}.
Here the processes can even be defined on $\mathbb{R}^d$ and not merely on $\mathbb{R}$ as in this paper.
Apart from that, however, there are two notable differences.
First, the publications mentioned above only consider sequences, whereas we more generally allow for nets of processes.
Secondly, a further main difference is that we also provide results when the limit process does not have a unique minimizing point.
Incidentally, both points also apply to the Argmin theorems for almost sure convergence.
The idea behind this is to understand semi-continuity of $\sigma$ or
$\tau$ as continuity with respect to the order topologies in place of the natural topology on $\mathbb{R}$. After that the Continuous Mapping Theorem does the rest.
Another way of looking at our approach is this: As long as a unique minimizer of the limit process exists, it is a natural candidate for the limit variable. If this is not the case, we do not search for new candidates, but simply make the topology on $\mathbb{R}$ smaller. This also differs from Ferger's \cite{Ferger} innovative approach, which retains the natural topology but more generally allows Choquet-capacities to play the part of limit ''distributions''. The applicability of the Argmin theorem for convex processes lies in the fact that here, in contrast to such processes in larger function spaces, the only prerequisite is that of convergence of the finite dimensional distributions, confer Ferger \cite{Ferger3} for an application in $M-$estimation.
For example let $C^*=\{f:\mathbb{R}\rightarrow \mathbb{R}; f \text{ is continuous}\} \supset C$ be endowed with the topology $\mathcal{T}_{uc}^*$
of uniform convergence on compacta. Here, even if you only consider sequences, one needs not only a functional limit theorem $Z_n \stackrel{\mathcal{D}}{\mathcal{\rightarrow}} Z$ in $(C^*,\mathcal{T}_{uc}^*)$, but also stochastic boundedness of the $\xi(Z_n)$:
\begin{equation} \label{bound}
 \lim_{d \rightarrow \infty} \limsup_{n \rightarrow \infty} \mathbb{P}(|\xi(Z_n)|>d)=0,
\end{equation}
see Ibragimov and Hasminski (1981) \cite{Ibragimov}, van der Vaart and Wellner (1996) \cite{Vaart} or Ferger (2015) \cite{Ferger2}.
For the proof of the functional limit theorem alone, besides convergence of the fidis also tightness of the sequence $(Z_n)$ is required, which is usually done through maximal inequalities. Not to forget the proof of (\ref{bound}), usually by upper estimates for the tail probabilities.
This means that the programme that has to be worked through is much more extensive and demanding than in the convex case.

There are two answers to the question why this is so. Firstly, by Proposition \ref{fidis} it is already true under the sole assumption that the fidis converge, that then even a functional limit theorem applies. Secondly, there is no counterpart of Corollary \ref{cont}, which says that $\sigma$ is continuous on $S_u$. For example, let us consider $(C^*,\mathcal{T}_{uc}^*)$ and let $\sigma^*(f)$ be the smallest minimizing point of $f$ (existence assumed). Then one can construct a sequence $(f_n)$ such that $f_n$ converges to $f$ uniformly on every compact $K \subseteq \mathbb{R}$, i.e.,
$f_n \rightarrow f$ in $(C^*,\mathcal{T}_{uc}^*)$, but $\sigma^*(f_n) \rightarrow -\infty$. In particular, $\sigma^*$ is far from being continuous on $S_u$.

\vspace{1cm}
\textbf{Declarations}\\

\textbf{Compliance with Ethical Standards}: I have read and I understand the provided information.\\

\textbf{Competing Interests}: The author has no competing interests to declare that are relevant to the content of
this article.


\begin{thebibliography}{22}

\bibitem{Bill} P. Billingsley, \textit{Convergence of Probability Measures}, New York: John Wiley \& Sons, 1968.

\bibitem{Bill2} P. Billingsley, \textit{Convergence of Probability Measures}, Second Edition, New York: John Wiley \& Sons, 1999.

\bibitem{Davis} R. A. Davis, K. Knight, and J. Liu, \textit{M-estimation for autoregressions with innite variance},
Stochastic Process. Appl. \textbf{40} (1992), 145--180.

\bibitem{Dshal} J. H. Dshalalow, \textit{Real Analysis}, Boca Raton, London, New York, Washington, D.C.: Chapman \& Hall/CRC, 2001.

\bibitem{Embrechts} P. Embrechts and M. Hofert, \textit{A note on generalized inverses}, Mathematical
Methods of Operations Research \textbf{77} (2013), 423--432.

\bibitem{Feng} C. Feng, H. Wang, X. M. Tu and J. Kowalski, \textit{A note on generalized inverses
of distribution function and quantile transformation}, Applied Mathematics \textbf{3} (2012), 2098--2100.

\bibitem{Ferger} D. Ferger, \textit{A Continuous Mapping Theorem for the argmin-set functional with applications to convex stochastic processes},
Kybernetika \textbf{57} (2021), 426--445.

\bibitem{Ferger2} D. Ferger, \textit{Arginf-sets of multivariate cadlag processes and their distributional convergence in hyperspace topologies},
Theory of Stochastic Processes \textbf{20(36)}, No.2 (2015), 13--41.

\bibitem{Ferger3} D. Ferger, \textit{Distributional hyperspace-convergence of Argmin-sets in convex M-estimation},
Theor. Probability and Math. Statist. (2023), in press.

\bibitem{Fortelle} A. de La Fortelle, \textit{A study on generalized inverses and increasing functions Part I: generalized
inverses}, hal-01255512 (2016), 1--14.

\bibitem{Geyer} J. Geyer, \textit{On the asymptotics of convex stochastic optimization}, Unpublished
manuscript (1996).

\bibitem{Hjort} N. L. Hjort and D. Pollard, \textit{Asymptotic for minimizers of convex processes}, Preprint,
Dept. of Statistics, Yale University (1993). arxiv:1107.3806v1

\bibitem{Ibragimov} I. A. Ibragimov and R. Z. Has'minskii, \textit{Statistical Estimation: Asymptotic Theory}, New York: Springer-Verlag, 1981.

\bibitem{Knight} K. Knight, \textit{What are the limiting distributions of quantile estimators?},
In \textit{Statistical Data Analysis Based on the $L_1$-Norm and Related Methods} (Y. Dodge, ed.) 47--65.
Series Statistics for Industry and Technology. Basel: Birkh\"{a}user, 2002.

\bibitem{Stute} P. G\"{a}nssler and W. Stute, \textit{Wahrscheinlichkeitstheorie}, Berlin, Heidelberg, Germany: Springer-Verlag, 1977.

\bibitem{Heuser} H. Heuser, \textit{Lehrbuch der Analysis. Teil 1}, 14-th Edition, Wiesbaden, Germany: B.G. Teubner Verlag, 2004.

\bibitem{Kall1} O. Kallenberg, \textit{Foundations of Modern Probability, Volume 1},
Third Edition, Springer Nature Switzerland AG, 2021.

\bibitem{Kall2} O. Kallenberg, \textit{Foundations of Modern Probability, Volume 2},
Third Edition, Springer Nature Switzerland AG, 2021.

\bibitem{Liese} F. Liese and K.-J. Mieschke, \textit{Statistical Decision Theory}, New York: Springer Science+Business Media, LLC, 2008.

\bibitem{Niculescu}{2006} C. P. Niculescu and L.-E. Persson, \textit{Convex Functions and Their Applications: A Contemporary Approach},
New York: Springer Science+Business Media, Inc., 2006.

\bibitem{Rockafellar} R. T. Rockafellar, \textit{Convex Analysis}, Princeton, New Jersey: Princeton University Press, 1970.

\bibitem{Singh} T. B. Singh, \textit{Introduction to Topology}, Singapore: Springer Nature, 2019.

\bibitem{Top} F. Tops{\o}e, \textit{Topology and Measure}, Lecture Notes in Mathematics Vol. 133, Berlin-Heidelberg-New York: Springer-Verlag, 1970.

\bibitem{Vaart} A. W. van der Vaart and J. A. Wellner, \textit{Weak Convergence and Empirical Processes. With Applications to Statistics}, New York: Springer-Verlag, 1996.

\end{thebibliography}

\end{document}